\documentclass[a4paper,12pt]{article}
\usepackage{amsmath}
\usepackage{amssymb}
\usepackage{amsthm}
\usepackage{latexsym}

\def\opn#1#2{\def#1{\operatorname{#2}}} 
\opn\rank{rank} \opn\grade{grade} \opn\depth{depth} \opn\pd{pd}
\opn\height{ht} \opn\deg{deg} \opn\reg{reg} \opn\init{in}
\opn\Spec{Spec} \opn\coker{coker}



\date{}
\newtheorem{definition}{Definition}[section]
\newtheorem{theorem}{Theorem}[section]
\newtheorem{remark}{Remark}[section]
\newtheorem{example}{Example}[section]
\newtheorem{lemma}{Lemma}[section]
\newtheorem{corollary}{Corollary}[section]
\newtheorem{proposition}{Proposition}[section]
\newtheorem{property}{Property}[section]

\numberwithin{equation}{section}

\begin{document}
\title{\Large{\textbf{Monomial ideals of graphs with loops}}}

\author{\small{MAURIZIO \,IMBESI}\\[-1mm]
\small{\em Department of Mathematics, University of Messina}\\[-2mm]
\small{\em Viale F. Stagno d'Alcontres, 31 $-$ I 98166 Messina, Italy}\\[-2mm]
\small{{\em e-mail:} imbesim@unime.it}\\[2mm]
\small{MONICA \,LA BARBIERA}\\[-1mm]
\small{\em Department of Mathematics, University of Messina}\\[-2mm]
\small{\em Viale F. Stagno d'Alcontres, 31 $-$ I 98166 Messina, Italy}\\[-2mm]
\small{{\em e-mail:} monicalb@unime.it}} \maketitle

\thispagestyle{empty} \vspace{-5mm}
\par\noindent
\begin{center} {\small \bf Abstract} \end{center}
\hspace{4mm}{\footnotesize We investigate, using the notion of linear quotients, significative classes of connected graphs whose monomial edge ideals, not necessarily squarefree, 
have linear resolution,
in order to compute standard algebraic invariants of the polynomial ring related to these graphs modulo such ideals. Moreover we are able to determine the structure of the ideals of vertex covers for the edge ideals associated to the previous classes of graphs which can have loops on any vertex. Lastly, it is shown that these ideals are of linear type.
\vspace{5mm} \par\noindent
{\bf AMS 2010 Subject Classification:} 05C25, 05E40, 13A30, 13C15.\\[1mm]
{\bf Key words and phrases:} Graphs with loops. Linear quotients. Ideals of vertex covers.
Ideals of linear type.

\vspace{1mm} \par\noindent}
\section*{Introduction}

\hspace{5mm} In \cite{IL1} it was first shown how some algebraic properties related to remarkable classes of graphs hold when appropriate loops are put.\par
In this article we are interested in studying standard algebraic properties of monomial ideals arising from the edges of some graphs, the so-called edge ideals, and exposing situations for which such properties are preserved when we add loops to them.
The generators of ideals of vertex covers for the edge ideals associated to graphs with loops are also examined, proving that these ideals are of linear type.\\
Let $\mathcal{G}$ be a graph on $n$ vertices $v_1,\ldots,v_n$\,. An algebraic object attached to $\mathcal{G}$ is the edge ideal $I(\mathcal{G})$, a monomial ideal of the polynomial ring in $n$ variables $R = K[X_1,\dots, X_n]$, $K$ a field.\\
When $\mathcal{G}$ is a loopless graph, $I(\mathcal{G})$ is generated by squarefree monomials of degree two in $R,\;
I(\mathcal{G})=\big(\{X_iX_j \;|\; \{v_i,v_j \}$ is an edge of $\mathcal{G}\,\}\big)$, but if $\mathcal{G}$ is a graph having loops
$\{v_i,v_i \}$, among the generators of $I(\mathcal{G})$ there are also non-squarefree monomials $X_i^2, \, i=1,\ldots,n$.\\
For a relevant class of connected graphs with loops we prove that their edge ideals have linear resolution, by using the technique of studying the linear quotients of such ideals, as previously employed in \cite{L}.\\
More precisely, it is examined a wide class of squarefree edge
ideals associated to the connected graphs $\mathcal{H}$, on $n$
vertices, consisting of the union of a complete graph $K_m$, $m<n$,
and star graphs with centers the vertices of $K_m$.
By adding loops on some vertices of $K_m$, we introduce a new class of
non-squarefree edge ideals associated to the connected graphs\,
$\mathcal{K}=\mathcal{H} \cup \{v_j,v_j\}$, for some $j=1,\ldots,m$\,.
We prove that $I(\mathcal{H})$ and $I(\mathcal{K})$ have
linear resolution. We also give formulae
for standard invariants of $R/I(\mathcal{H})$ and
$R/I(\mathcal{K})$ such as dimension, projective dimension, depth, Castelnuovo-Mumford regularity.\\
In \cite{IL} the computation of such invariants is made for the symmetric algebras of subclasses of squarefree
edge ideals generated by $s$-sequences associated \mbox{to $\mathcal{G}$.}\\
Some algebraic aspects linked to the minimal vertex covers for such
classes of graphs can be considered. Keeping in mind the one to one
correspondence between minimal vertex covers of any graph and
minimal prime ideals of its edge ideal, we generalize to a graph
with loops the notion of ideal of (minimal) vertex covers
and determine the structure of the ideals of vertex covers
$I_c(\mathcal{H})$ and $I_c(\mathcal{K}')$ for the classes of edge
ideals associated to $\mathcal{H}$ and $\mathcal{K}'=\mathcal{H}
\cup \{v_i,v_i\}$, for some $i=1,\ldots,n$\,. We may observe that $\mathcal{K}'$ is larger than $\mathcal{K}$ because
loops on $\mathcal{K}'$ stay also on vertices that don't belong to
$K_m$\,. Moreover we prove that the symmetric and the Rees algebras
of $I_c(\mathcal{H})$ and $I_c(\mathcal{K}')$
over $R$ are isomorphic, namely such ideals of linear type.\\
The work is subdivided as follows. In section 1 the classes
of graphs $\mathcal{H}, \mathcal{K}$ and their edge ideals are analyzed. In particular, starting from $\mathcal{H}$, we introduce $\mathcal{K}$ and consider its edge
ideal whose ordered generators are
$X_1X_{m+1},X_1X_{m+2},...,$ $X_1X_{m+i_1}, X_1X_2,
X_2X_{m+i_1+1}, \ldots, X_{2}X_{m+i_1+i_2}, X_2X_{3},X_1X_{3},
X_3X_{m+i_1+i_2+1},...,$ $X_{3}X_{m+i_1+i_2+i_3},
X_3X_{4},X_2X_{4},X_1X_4,..., X_mX_{m+i_1+\cdots +
i_{m-1}+1},...,X_{m}X_{m+i_1+\cdots +i_m},$
$X_{t_1}^2, \ldots, X_{t_l}^2$, where $\{t_1,\ldots,t_l\}$ $\subseteq
\{1, \ldots, m\}$ and $n=m+i_1+\cdots +i_m$\,.\\ According to results
that characterize monomial ideals with linear quotients
(\cite{{CH},{HH2}}), it is showed that the edge ideals of
$\mathcal{H}$ and $\mathcal{K}$ have linear resolution.
As an application, standard algebraic invariants are calculated.\\
In section 2 we examine the structure of the ideals of vertex covers
for the classes of edge ideals associated to $\mathcal{H}$ and
$\mathcal{K}'$ using the description of the ideals of vertex
covers for the edge ideals associated to the complete graphs and
the star graphs that make them up.\\
We remark that the ideal of vertex covers of $I(\mathcal{K}')$,
 when $\mathcal{K}'$ has loops on all its $n$ vertices, has the unique generator \,$X_1\cdots X_{m} \cdots X_n$\,.\\
In section 3 we investigate the Rees algebra of $I_c(\mathcal{H})$
and $I_c(\mathcal{K}')$. We recall that if $I=(f_1,\ldots,f_s)$ is
an ideal of $R$, the Rees algebra $\Re(I)$ of $I$ is defined to be
the $R$-graded algebra $\bigoplus_{i\geqslant 0}I^i$.  Let
$\varphi:R[T_1,\ldots,T_s] \rightarrow \Re(I)=R[f_1t,\ldots,f_st]$
be an epimorphism of graded $R$-algebras defined by
$\varphi(T_i)=f_it$\,, $i=1,\ldots,s$\,, and $N=\ker\varphi$ be the
ideal of presentation of $\Re(I)$. If $N$ is generated by linear
relations, namely $R$-homogeneous elements of degree $1$, then $I$
is said of linear type. Several classes of ideals of $R$ of linear
type are known. For instance, ideals generated by $d$-sequences and
$M$-sequences are of linear type (\cite{{H},{V},{CD}}). We show that
the ideals of vertex covers $I_c(\mathcal{H})$ and
$I_c(\mathcal{K}')$ are of linear type.


\section{Linear resolutions and invariants}

\hspace{6mm} Let $R = K[X_1,\ldots,X_n]$ be the polynomial ring in $n$ variables
over a field $K$ with $\deg X_i=1$, for all $i=1,\ldots,n$. For a
monomial ideal $I \subset R$ we denote by $G(I)$ its unique minimal
set of monomial generators.

\begin{definition}\rm{ A \textit{vertex cover} of a monomial ideal $I \subset R$
is a subset $C$ of $\{ X_1,\ldots,X_n \}$ such that each $u\in G(I)$
is divided by some $X_i \in C$.\\ The vertex cover $C$ is called
\textit{minimal} if no proper subset of $C$ is a vertex cover of $I$.}
\end{definition}

\noindent Let $h(I)$ denote the minimal cardinality of the vertex covers of $I$.

\begin{definition}\rm{
A monomial ideal $I \subset R$ is said to have \textit{linear quotients} if there is an
ordering $u_1, \ldots, u_t$ of monomials belonging to $G(I)$ with
$\deg (u_1) \leqslant \cdots \leqslant \deg (u_t)$ such that
the colon ideal $(u_1, \ldots, u_{j-1}):(u_j)$
is generated by a subset of $\{X_1,\ldots,X_n \}$, for $2\leqslant j \leqslant t$\,.}
\end{definition}

\begin{remark}\rm{
A monomial ideal $I$ of $R$ generated in one degree that has
linear quotients admits a linear resolution (\cite{CH}, Lemma 4.1).}
\end{remark}

\noindent For a  monomial ideal $I$ of $R$ having linear quotients with
respect to the ordering $u_1, \ldots, u_{t}$ of the monomials of
$G(I)$, let $q_j(I)$ denote the number of the variables which is
required to generate the ideal $(u_1, \ldots, u_{j-1}):(u_j)$\,, and
set $q(I)= \textrm{max}_{2\leqslant j\leqslant t}\, q_j(I)$.

\begin{remark}\rm{
The integer $q(I)$ is independent on the choice of the ordering of
the generators that gives linear quotients (\cite{HT}).}
\end{remark}

\noindent In this section we study classes of monomial ideals generated
in degree two arising from graphs that have linear quotients, that is
equivalent to say that they have linear resolution (\cite{HH2}).\\[1mm]
Let's introduce some preliminary notions.\\
Let $\mathcal{G}$ be a graph and  $V(\mathcal{G})= \{v_1, \ldots,
v_n\}$ be the set of its vertices. We put $E(\mathcal{G})
=\{\{v_i,v_j\} | v_i\neq v_j, \ \ v_i, v_j \in V(\mathcal{G}) \} $
the set of edges of $\mathcal{G}$ and $L(\mathcal{G}) =\{\{v_i,v_i\}
|  \ \ v_i \in V(\mathcal{G}) \} $ the set of loops of
$\mathcal{G}$. Hence $\{v_i,v_j\}$ is an edge joining $v_i$ to $v_j$
and $\{ v_i,v_i \}$ is a loop of the vertex $v_i$. Set
$W(\mathcal{G}) = E(\mathcal{G}) \cup
L(\mathcal{G})$.\\
If $L(\mathcal{G}) = \emptyset$, the graph $\mathcal{G}$ is said
simple or loopless, otherwise
$\mathcal{G}$ is a graph with loops.\\
A graph $\mathcal{G}$ on $n$ vertices $v_1,\ldots, v_n$ is
\textit{complete} if there exists an edge for all pairs $\{v_i,v_j
\}$ of vertices of $\mathcal{G}$. It is denoted by $K_n$.\\[1mm]
If $V(\mathcal{G})=\{v_1,\ldots,v_n\}$ and $R=K[X_1,\ldots,X_n]$ is
the polynomial ring over a field $K$ such that each variable $X_i$
corresponds to the vertex $v_i$, the \textit{edge ideal}
$I(\mathcal{G})$ associated to $\mathcal{G}$ is the ideal
$\big(\{X_{i}X_{j}\,|\,\{v_{i},v_{j}\}\in W(\mathcal{G})\}\big) \subset R$\,.
\par\noindent
Note that the non-zero edge ideals are those generated by monomials of degree 2. This implies that $I(\mathcal{G})$ is a graded ideal of $S$ of initial degree 2, that
is $I(\mathcal{G})=\oplus_{i\geqslant 2} \,(I(\mathcal{G})_{i})$\,.
If $W(\mathcal{G})= \emptyset$, then $I (\mathcal{G})=(0)$\,.
\vspace{3mm}
\par
First it is examined a relevant wide class of squarefree edge ideals
associated to connected graphs $\mathcal{H}$, on $n$ vertices,
consisting of the union of a complete graph $K_m$, $m<n$, and star
graphs in the vertices of
$K_m$.\\
More precisely,  $\mathcal{H}=K_m \cup $\;star$_j(k)$,  where $K_m$
is the complete graph on $m$ vertices $v_1, \ldots, v_m$, $m\!<\!n,$
and \,star$_j(k)$ is the star graph on $k$ vertices with center
$v_j$,
for some $j=1,\ldots,m, \,  k\!\leq \!n\!-\!m$. One has:\\
$I(\mathcal{H})=$ $(X_1X_{m+1},X_1X_{m+2}, \ldots, X_{1}X_{m+i_1},
X_1X_{2}, X_2X_{m+i_1+1},X_2X_{m+i_1+2}, \ldots,$
$X_{2}X_{m+i_1+i_2}, X_2X_{3},X_1X_{3},
X_3X_{m+i_1+i_2+1},X_3X_{m+i_1+i_2+2}, \ldots,
X_{3}X_{m+i_1+i_2+i_3},$\\
$X_3X_{4},$ $X_2X_{4},X_1X_4, \ldots, X_mX_{m+i_1+i_2+\cdots +
i_{m-1}+1},X_mX_{m+i_1+i_2+\cdots +
i_{m-1}+2}, \ldots,$\\
$ X_{m}X_{m+i_1+i_2+\cdots + i_{m-1}+i_m})\subset R=K[X_1,\ldots,
X_n]$.\\ $|G(I(\mathcal{H}))|=i_1+i_2+\cdots +i_m+\frac{m(m-1)}{2}$.

\begin{proposition}\label{Prop1}
$I(\mathcal{H})$ has a linear resolution.
\end{proposition}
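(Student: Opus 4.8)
The plan is to prove that $I(\mathcal{H})$ has linear quotients with respect to the ordering $u_1,\ldots,u_t$ of $G(I(\mathcal{H}))$ displayed above. Since $\mathcal{H}$ is loopless, each $u_s$ is a squarefree monomial of degree $2$, so the monotonicity $\deg u_1\leq\cdots\leq\deg u_t$ required by the definition of linear quotients holds trivially; by Remark 1.1 it will then follow that $I(\mathcal{H})$ has a linear resolution. Thus the whole task reduces to checking, for each $j$, that the colon ideal $(u_1,\ldots,u_{j-1}):(u_j)$ is generated by a set of variables.

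To analyze this colon ideal, write $u_j=X_aX_b$ and recall that $(u_1,\ldots,u_{j-1}):(u_j)$ is generated by the monomials $\operatorname{lcm}(u_i,u_j)/u_j$ with $1\leq i<j$. For squarefree quadratics only two possibilities arise: if the edge $u_i$ meets $u_j$ in exactly one vertex, the quotient is a single variable; if $u_i$ is disjoint from $u_j$, the quotient is the quadratic monomial $u_i$ itself. Consequently the colon ideal is generated by the variables $X_c$ such that some earlier edge joins $v_c$ to $v_a$ or to $v_b$, together with the quadratics $u_i$ arising from the earlier edges disjoint from $u_j$. Linear quotients will therefore hold once I show that every such quadratic $u_i$ is redundant, i.e. divisible by one of those variables. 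Equivalently, I must exhibit, for each pair of disjoint edges $u_i,u_j$ with $i<j$, a cross edge joining an endpoint of $u_i$ to an endpoint of $u_j$ that precedes $u_j$ in the ordering.

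The verification rests on reading the displayed ordering as a sequence of rounds $r=1,\ldots,m$: round $r$ first lists the $K_m$-edges $X_{r-1}X_r,X_{r-2}X_r,\ldots,X_1X_r$ into the vertex $v_r$ and then the star edges at $v_r$. Assigning to each edge the round of its larger $K_m$-index (for a complete edge) or of its center (for a star edge), one sees that every edge of round $r$ contains the vertex $v_r$; hence two edges of the same round share $v_r$ and cannot be disjoint. So if $u_i$ and $u_j$ are disjoint with $i<j$, their rounds satisfy $s<r$, where $r$ is the round of $u_j$ and $v_s$ is a $K_m$-endpoint of $u_i$. Writing $u_j=\{v_r,w\}$ with $w$ the remaining endpoint, the candidate cross edge $\{v_s,v_r\}$ is a $K_m$-edge of round $r$; it precedes $u_j$ whenever $u_j$ is a star edge (the complete edges of round $r$ come first) or whenever $u_j=\{v_p,v_r\}$ with $s>p$ (the complete edges of round $r$ are listed by decreasing smaller index). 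In the remaining situation $u_j=\{v_p,v_r\}$ with $s<p$, disjointness forces $s\neq p$ and the cross edge $\{v_s,v_p\}$ lies in the earlier round $p<r$, hence again precedes $u_j$. In every case the cross edge contributes the variable $X_s$ to the colon ideal, and since $v_s$ is an endpoint of $u_i$ the quadratic $u_i$ is divisible by $X_s$, as required.

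I expect the main obstacle to be exactly this last piece of bookkeeping: one must handle the two-level structure of the ordering (the rounds, and within each round the complete edges before the star edges and in decreasing index) and make sure that the produced cross edge genuinely precedes $u_j$ in all configurations, the delicate point being the internal decreasing order of the complete edges feeding a common vertex $v_r$. Once the cross-edge claim is established in each configuration, $I(\mathcal{H})$ has linear quotients, and Remark 1.1 yields the linear resolution.
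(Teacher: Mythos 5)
Your proof is correct and takes essentially the same route as the paper: both establish that $I(\mathcal{H})$ has linear quotients with respect to exactly the ordering of generators displayed, and then invoke the fact (Remark 1.1, i.e.\ \cite{CH}, \cite{HH2}) that a monomial ideal generated in one degree with linear quotients has a linear resolution. The only difference is presentational: the paper verifies the quotient property by explicitly listing the colon ideals $(f_1,\ldots,f_{h-1}):(f_h)$ (with ellipses), whereas you prove it uniformly via the round/cross-edge divisibility argument, which in fact supplies the bookkeeping the paper leaves implicit.
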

\begin{proof}
$I(\mathcal{H})$
is the edge ideal of the graph $\mathcal{H}$ with
$n=m+i_1+i_2+\cdots + i_m$ vertices and $\ell= i_1+i_2+\cdots +i_m +
\frac{m(m-1)}{2}=n-m + \frac{m(m-1)}{2}= n+  \frac{m(m-3)}{2}$
edges. Set $f_1, \ldots, f_{\ell}$\, the squarefree monomial
generators of $I(\mathcal{H})$. It results:\\
$(f_1):(f_2)=(X_{m+1})$, \\
$(f_1,f_2):(f_3)=(X_{m+1},X_{m+2})$,\\
$...................................$,\\
$(f_1, \ldots, f_{i_1-1}):(f_{i_1})=(X_{m+1},X_{m+2}, \ldots, X_{m+i_1-1})$,\\
$(f_1, \ldots, f_{i_1}):(f_{i_1 +1})=(X_{m+1},X_{m+2}, \ldots, X_{m+i_1})$,\\
$(f_1, \ldots, f_{i_1 + 1}):(f_{i_1+2})=(X_1X_{m+1},X_1X_{m+2},
\ldots, X_1X_{m+i_1},X_1)=(X_1)$,\\
$(f_1, \ldots, f_{i_1+2}):(f_{i_1 +3})=(X_{1}, X_{m+i_1+1})$,\\
$...................................$,\\
$(f_1, \ldots,
f_{i_1+i_2+1}):(f_{i_1 +i_2+2})=(X_{1}, X_{m+i_1+1}, \ldots,
X_{m+i_1+i_2})$,\\ $(f_1, \ldots, f_{i_1+i_2+2}):(f_{i_1
+i_2+3})=(X_{m+1},X_{m+2}, \ldots, X_{m+i_1}, X_{2})$,\\ $(f_1,
\ldots, f_{i_1+i_2+3}):(f_{i_1 +i_2+4})=(X_{2}, X_{1})$,\\
$...................................$,\\
$(f_1, \ldots, f_{i_1+i_2+i_3+2}):(f_{i_1 +i_2+i_3+3})=(X_{2},X_{1},
X_{m+i_1+i_2+1}, \ldots, X_{m+i_1+i_2+i_3})$,\\ and so on up to
\\$(f_1, \ldots, f_{\ell-1}):(f_{\ell})=(X_{1}, \ldots, X_{m-1},
X_{m+i_1+i_2+\cdots +i_{m-1}+1}, \ldots,
X_{m+i_1+i_2+\cdots +i_{m}-1})$.\\[1mm]
Hence $I(\mathcal{H})$ has linear quotients. According to results in
\cite{HH2} about monomial ideals with linear quotients, it follows
that the edge ideal $I(\mathcal{H})$ has a linear resolution.
\end{proof}
\begin{remark}\rm{
R. Fr\"oberg proved that the edge ideal of a simple graph has a
linear resolution if and only if its complementary graph is chordal
(\cite{HH2}, Theorem 9.2.3). By using such a characterization, it is possible to show that the edge
ideal $I(\mathcal{H})$ of Proposition \ref{Prop1} has a linear resolution.}
\end{remark}

\noindent The study of the linear quotients is useful in order to investigate algebraic invariants
of $R/I(\mathcal{H})$: the \emph{dimension}, $\dim_R(R/I(\mathcal{H}))$,
the \emph{depth}, \,depth$(R/I(\mathcal{H}))$, the \emph{projective dimension},
${\pd}_R(R/I(\mathcal{H}))$ and the \emph{Castelnuovo-Mumford regularity},
$\reg_R(R/I(\mathcal{H}))$.

\begin{lemma}\label{1.1}
Let $R=K[X_1,\ldots,X_n]$ and $I(\mathcal{H}) \subset R$. Then:
\vspace{-1mm}
$$ h(I(\mathcal{H})) = m\,.$$
\end{lemma}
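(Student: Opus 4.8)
The plan is to prove the two inequalities $h(I(\mathcal{H})) \le m$ and $h(I(\mathcal{H})) \ge m$ separately, working throughout with the graph-theoretic reading of a vertex cover supplied by Definition 1.1: a subset $C \subseteq \{X_1,\ldots,X_n\}$ covers $I(\mathcal{H})$ exactly when every edge of $\mathcal{H}$ has at least one endpoint whose variable lies in $C$.

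For the upper bound I would simply exhibit the candidate cover $C_0 = \{X_1,\ldots,X_m\}$ consisting of the variables of the complete part $K_m$. Every generator of $I(\mathcal{H})$ is either an edge $X_aX_b$ of $K_m$ (with $a,b \le m$) or a star edge $X_jX_\ell$ whose center index satisfies $j \le m$; in both situations a variable with index $\le m$ divides the generator, so $C_0$ is a vertex cover and hence $h(I(\mathcal{H})) \le m$.

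For the lower bound I would take an arbitrary vertex cover $C$ and set $T = \{\,j \in \{1,\ldots,m\} : X_j \notin C\,\}$, the indices of the $K_m$-vertices omitted by $C$. The decisive observation is that $T$ must be an independent set of $K_m$: if two distinct indices $a,b$ lay in $T$, the edge $X_aX_b$ of the complete graph would be uncovered. Since $K_m$ is complete this forces $|T| \le 1$. If $T = \emptyset$ then $\{X_1,\ldots,X_m\} \subseteq C$ and $|C| \ge m$ at once. The remaining case $T = \{j_0\}$ is where the star structure enters: here the $m-1$ variables $X_a$ (with $a \ne j_0$, $a \le m$) all lie in $C$, and because the center $v_{j_0}$ carries at least one leaf (so some star edge $X_{j_0}X_\ell$ with $\ell > m$ occurs among the generators) while $X_{j_0}\notin C$, that edge forces the fresh variable $X_\ell$ into $C$. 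As $X_\ell$ is distinct from $X_1,\ldots,X_m$, we again obtain $|C| \ge (m-1)+1 = m$. Combining the two bounds yields $h(I(\mathcal{H})) = m$.

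I expect the only real subtlety to be the lower bound, and within it the case $T = \{j_0\}$: one must use that every center of $K_m$ genuinely carries a pendant leaf (i.e.\ $i_j \ge 1$ for all $j$), since it is precisely this feature that separates $h(I(\mathcal{H})) = m$ from the bare minimum vertex cover $m-1$ of $K_m$ alone. Everything else is a direct translation of the covering condition onto the explicit generators listed before Proposition \ref{Prop1}.
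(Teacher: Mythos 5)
Your proof is correct, and it is genuinely more complete than the paper's own argument, which consists of a single sentence: $C=\{X_1,\ldots,X_m\}$ is a minimal vertex cover of $I(\mathcal{H})$ ``by construction'', hence $h(I(\mathcal{H}))=m$. That sentence only delivers your upper bound: since $h(I)$ is defined as the \emph{minimum cardinality} of a vertex cover, inclusion-minimality of one particular cover does not by itself rule out the existence of a smaller cover elsewhere, so a lower-bound argument is genuinely needed and the paper simply glosses over it. Your argument supplies exactly this missing half: the set $T$ of $K_m$-indices omitted by an arbitrary cover $C$ must be independent in $K_m$, hence $|T|\le 1$, and in the case $T=\{j_0\}$ a pendant edge $X_{j_0}X_\ell$ at the center $v_{j_0}$ forces a leaf variable $X_\ell$ (with $\ell>m$, so distinct from the $K_m$-variables) into $C$. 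You are also right that the hypothesis $i_j\ge 1$ for every $j$ is essential and not a pedantic point: if some vertex $v_{j_0}$ of $K_m$ carried no star, then $\{X_a : a\le m,\ a\ne j_0\}$ would be a vertex cover of cardinality $m-1$ and the lemma would fail; this is precisely the situation the paper itself treats later in Proposition \ref{Km+1star}, where a vertex of $K_m$ of degree $m-1$ yields a minimal vertex cover on $m-1$ vertices. In the Section 1 setting the generators of $I(\mathcal{H})$ are listed with a nonempty star block at every vertex of $K_m$ (so $n=m+i_1+\cdots+i_m$ with each $i_j\ge 1$), and under that reading your argument goes through and turns the paper's assertion into an actual proof.
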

\begin{proof}  The minimal cardinality of the vertex
covers of $I(\mathcal{H})$ is $h(I(\mathcal{H}))= m$, being
$C=\{X_1,\ldots,X_m \}$ a minimal vertex cover of $I(\mathcal{H})$
by construction. \end{proof}

\begin{lemma}\label{1.2}
Let $R=K[X_1,\ldots,X_n]$  and $I(\mathcal{H}) \subset R$. Then:
\vspace{-1mm}
$$q(I(\mathcal{H})) = m + \textrm{max}_{1\leqslant j\leqslant m}\, i_j -2\,.$$
\end{lemma}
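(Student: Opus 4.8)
The plan is to compute $q_j(I(\mathcal{H}))$ generator by generator, reading the colon ideals $(f_1,\ldots,f_{j-1}):(f_j)$ off the linear-quotients ordering $f_1,\ldots,f_\ell$ exhibited in the proof of Proposition~\ref{Prop1}, and then to maximise over $j$. Since $q(I(\mathcal{H}))$ does not depend on which ordering realises the linear quotients, it is enough to work with this particular ordering. I would group the generators into two families: the \emph{star edges} $X_kX_p$, where $v_p$ is a leaf of the star centred at $v_k$, and the \emph{complete-graph edges} $X_aX_k$ of $K_m$, which in the given order form, for each $k$, the block $X_{k-1}X_k,\,X_{k-2}X_k,\,\ldots,\,X_1X_k$ that is inserted right before the star edges of $v_k$.

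Consider first the star edge of $v_k$ that is its $s$-th leaf. Every earlier generator either shares the factor $X_k$ --- this happens for the preceding star edges of $v_k$ and for the complete-graph edges $X_aX_k$ ($a<k$), contributing respectively the $s-1$ earlier leaf variables and the variables $X_1,\ldots,X_{k-1}$ --- or else produces, after taking the colon, a monomial divisible by one of these variables. Hence $(f_1,\ldots,f_{j-1}):(f_j)$ is minimally generated by $X_1,\ldots,X_{k-1}$ together with $s-1$ leaf variables, so $q_j=(k-1)+(s-1)=k+s-2$, which over the star edges of $v_k$ is largest when $s=i_k$.

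For a complete-graph edge $X_aX_k$ the decisive contribution comes from the star of the pivot vertex $v_a$: taking the colon of $X_aX_k$ against each star edge $X_aX_q$ of $v_a$ releases the leaf variable $X_q$, so all $i_a$ leaves at $v_a$ enter the colon ideal simultaneously. The remaining complete-graph edges through $v_a$, together with the earlier edges $X_bX_k$ ($b>a$) of the block, contribute the variables $X_c$ with $c\in\{1,\ldots,k-1\}\setminus\{a\}$, and every other generator colons to something divisible by one of the variables already found. As the leaf variables of $v_a$ and the variables $X_c$ are disjoint, one obtains $q_j=(k-2)+i_a$.

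It remains to maximise. Writing $\mu:=\max_{1\leqslant j\leqslant m} i_j$ and using $k\leqslant m$ together with $i_a,i_k\leqslant \mu$, both expressions $k+s-2$ and $(k-2)+i_a$ are at most $m+\mu-2$, whence $q(I(\mathcal{H}))\leqslant m+\mu-2$. For the reverse bound pick $j^\ast$ with $i_{j^\ast}=\mu$: if $j^\ast<m$ the complete-graph edge $X_{j^\ast}X_m$ attains $(m-2)+i_{j^\ast}=m+\mu-2$, while if $\mu$ is attained only at $j^\ast=m$ the last star edge of $v_m$ attains $m+i_m-2=m+\mu-2$ (and for $m=1$ there are no complete-graph edges, the star edges of $v_1$ already giving $i_1-1=m+\mu-2$). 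This yields the claimed equality. The main obstacle is the bookkeeping for the complete-graph edges: one must notice that taking the colon of $X_aX_k$ against the whole star at $v_a$ frees all $i_a$ leaf variables at once --- this is exactly the term that injects $\max_j i_j$ into the formula --- and verify that no colon ideal needs strictly more variables than stated.
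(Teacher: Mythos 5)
Your proposal is correct and takes essentially the same route as the paper: both read the colon ideals off the linear-quotients ordering exhibited in Proposition~\ref{Prop1} and maximise the number of variables needed over all generators, arriving at $m+\max_{1\leqslant j\leqslant m} i_j-2$. Your write-up is in fact more careful than the paper's one-line count, since you separate the star-edge contribution $k+s-2$ from the complete-graph-edge contribution $(k-2)+i_a$ and treat explicitly the case in which $\max_j i_j$ is attained only at $j=m$ (where the last star edge of $v_m$, rather than a complete-graph edge, realises the maximum).
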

\begin{proof}
By the computation of the linear quotients (Proposition
\ref{Prop1}), the maximum number of the variables which is required
to generate the ideal $(f_1, \ldots, f_{h-1}):(f_h)$, for $h=1,
\ldots, \ell$, is given by $(m-1)+\textrm{max}_{1\leqslant
j\leqslant m}\, i_j -1$. It follows that $q(I(\mathcal{H}))= m +
\textrm{max}_{1\leqslant j\leqslant m}\, i_j -2\,.$
\end{proof}
\begin{theorem}
Let $R=K[X_1,\ldots,X_n]$  and $I(\mathcal{H}) \subset R$. Then:
\item{1)} $\textrm{dim}_R(R/I(\mathcal{H})) = n-m$\,.
\item{2)} $\textrm{pd}_R(R/I(\mathcal{H}))= m +\textrm{max}_{1\leqslant j\leqslant m}\,
i_j -1$\,.
\item{3)} $\textrm{depth}_R(R/I(\mathcal{H}))=n-m - \textrm{max}_{1\leqslant j\leqslant m}\,
i_j +1$\,.
\item{4)} $\textrm{reg}_R(R/I(\mathcal{H}))= 1$\,.
\end{theorem}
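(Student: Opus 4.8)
The plan is to obtain all four invariants as formal consequences of the two lemmas just proved together with the fact that $I(\mathcal{H})$ has linear quotients (Proposition \ref{Prop1}); the genuine structural work lies in those lemmas, so what remains is to invoke the correct general principles in the right order.

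For 1) I would use that $I(\mathcal{H})$ is a squarefree monomial ideal whose minimal primes correspond bijectively to the minimal vertex covers of $\mathcal{H}$: each minimal vertex cover $C$ yields the minimal prime $(X_i \mid X_i \in C)$, of height $|C|$. Hence $\height I(\mathcal{H}) = h(I(\mathcal{H})) = m$ by Lemma \ref{1.1}, and therefore $\dim_R(R/I(\mathcal{H})) = n - \height I(\mathcal{H}) = n - m$. For 2) the key input is the iterated mapping-cone resolution attached to an ideal with linear quotients (\cite{HH2}): since each colon ideal $(f_1,\ldots,f_{j-1}):(f_j)$ is generated by $q_j$ variables it is resolved by a Koszul complex of length $q_j$, and building up the mapping cones gives $\pd_R I(\mathcal{H}) = \max_{2\leqslant j\leqslant \ell} q_j = q(I(\mathcal{H}))$. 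Thus $\pd_R(R/I(\mathcal{H})) = q(I(\mathcal{H})) + 1$, and Lemma \ref{1.2} turns this into $\pd_R(R/I(\mathcal{H})) = m + \max_{1\leqslant j\leqslant m} i_j - 1$.

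For 3) I would feed 2) directly into the Auslander--Buchsbaum formula $\depth_R(R/I(\mathcal{H})) + \pd_R(R/I(\mathcal{H})) = \depth R = n$, obtaining $\depth_R(R/I(\mathcal{H})) = n - m - \max_{1\leqslant j\leqslant m} i_j + 1$. For 4), since $I(\mathcal{H})$ is generated in the single degree $2$ and has a linear resolution, its regularity equals its generating degree, $\reg_R I(\mathcal{H}) = 2$; passing through the short exact sequence $0 \to I(\mathcal{H}) \to R \to R/I(\mathcal{H}) \to 0$ then gives $\reg_R(R/I(\mathcal{H})) = \reg_R I(\mathcal{H}) - 1 = 1$.

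The only point demanding care, rather than a real obstacle, is the equality $\pd_R(R/I) = q(I) + 1$ used in step 2): one must check that the iterated mapping cone is \emph{minimal}, so that no cancellation lowers the homological degree and $\max_j q_j$ is genuinely attained. This is exactly what the hypotheses guarantee, because the colon ideals are generated by variables and the resolution is linear; once this is granted, the remaining three identities follow mechanically from Lemmas \ref{1.1} and \ref{1.2} and the standard homological formulae.
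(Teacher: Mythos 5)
Your proposal is correct and follows essentially the same route as the paper: dimension via $h(I(\mathcal{H}))=m$ (Lemma \ref{1.1}), projective dimension via the Herzog--Takayama mapping-cone formula $\pd_R(R/I(\mathcal{H}))=q(I(\mathcal{H}))+1$ combined with Lemma \ref{1.2}, depth via Auslander--Buchsbaum, and regularity from the linear resolution. The only difference is that you unpack the cited facts (the vertex-cover/minimal-prime correspondence and the minimality of the iterated mapping cone) rather than quoting them, which is a matter of exposition, not of substance.
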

\begin{proof}  1) One has $\textrm{dim}_R(R/I(\mathcal{H})) =
\textrm{dim}_R R - h(I(\mathcal{H}))$ (\cite{HH1}). Hence, by Lemma \ref{1.1}\,,
$\textrm{dim}_R(R/I(\mathcal{H})) =n-m$\,.\\
2) The length of the minimal free resolution of $R/I(\mathcal{H})$
over $R$ is equal to $q(I(\mathcal{H})) + 1$ (\cite{HT}, Corollary
1.6). Then $\textrm{pd}_R(R/I(\mathcal{H}))= m +
\textrm{max}_{1\leqslant j\leqslant m}\,
i_j -1$.\\
3) As a consequence of 2), by Auslander-Buchsbaum
formula, one has \\
$\textrm{depth}_R(R/I(\mathcal{H}))=n
-\textrm{pd}_R(R/I(\mathcal{H}))= n-m -\textrm{max}_{1\leqslant
j\leqslant m}\,
i_j +1 $.\\
4) $I(\mathcal{H})$ has a linear resolution, then
$\textrm{reg}_R(R/I(\mathcal{H}))=1$.
\end{proof}


\vspace{1mm}
\par
Starting from the class of edge ideals associated to $\mathcal{H}$ and adding loops on some vertices of $K_m$, we now
analyze a larger class of non-squarefree edge ideals associated to
connected graphs\, $\mathcal{K}=\mathcal{H} \cup \{v_j,v_j\}$, for
some $j=1,\ldots,m$\,.\\
 Let $\mathcal{K}$ be the connected graph
with $n$ vertices $v_1, \ldots, v_n$ and $K_m$, $m<n$, be the
complete subgraph of $\mathcal{K}$ with vertices $v_1, \ldots, v_m$,
such that \\$I(\mathcal{K})= (X_1X_{m+1},X_1X_{m+2}, \ldots,
X_{1}X_{m+i_1}, X_1X_{2}, X_2X_{m+i_1+1},X_2X_{m+i_1+2}, \ldots,$ $
X_{2}X_{m+i_1+i_2}, X_2X_{3},X_1X_{3},
X_3X_{m+i_1+i_2+1},X_3X_{m+i_1+i_2+2}, \ldots,
X_{3}X_{m+i_1+i_2+i_3},$\\$ X_3X_{4},X_2X_{4},X_1X_4, \ldots,
X_mX_{m+i_1+i_2+\cdots + i_{m-1}+1},X_mX_{m+i_1+i_2+\cdots +
i_{m-1}+2}, \ldots,$\\$ X_{m}X_{m+i_1+i_2+\cdots + i_{m-1}+i_m},
X_{t_1}^2, \ldots, X_{t_l}^2)\subset R=K[X_1,\ldots, X_n]$, with
$\{t_1,\ldots,t_l\}$ $\subseteq \{1, \ldots, m\}$ and
$n=m+i_1+\cdots
+i_m$.\\
$|G(I(\mathcal{K}))|=n-(m-l)+\frac{m(m-1)}{2}$\,.\\
Taking in account the notion of linear quotients, it is proved that the edge ideal $I(\mathcal{K})$ has
still a linear resolution.

\begin{proposition}\label{P2}
$I(\mathcal{K})$ has a linear resolution.
\end{proposition}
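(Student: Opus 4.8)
The plan is to mimic exactly the strategy used in Proposition~\ref{Prop1}, namely to exhibit an explicit ordering of the minimal monomial generators $G(I(\mathcal{K}))$ for which all colon ideals $(u_1,\ldots,u_{j-1}):(u_j)$ are generated by variables, and then invoke the result of \cite{HH2} that a monomial ideal generated in a single degree with linear quotients has a linear resolution. Since all generators of $I(\mathcal{K})$ — both the degree-two edge monomials $X_iX_j$ and the loop monomials $X_{t_k}^2$ — have degree two, the hypothesis of Remark~\ref{Prop1} (generation in one degree) is satisfied, so linear quotients will immediately yield the linear resolution.

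First I would reuse the ordering $f_1,\ldots,f_\ell$ of the squarefree edge generators established in Proposition~\ref{Prop1}, since $I(\mathcal{H})\subset I(\mathcal{K})$ and the squarefree part of $G(I(\mathcal{K}))$ coincides with $G(I(\mathcal{H}))$. For this initial segment the colon ideals are verbatim the ones already computed, so linear quotients hold up to $f_\ell$. The new step is to append the loop generators $X_{t_1}^2,\ldots,X_{t_l}^2$ at the end of the list and to compute the colon ideal of each against all the generators preceding it. The key observation is that for a loop variable $X_{t_k}$ with $t_k\in\{1,\ldots,m\}$, the vertex $v_{t_k}$ lies in the complete graph $K_m$ and therefore is joined by an edge to every other vertex of $K_m$; consequently for each $s\neq t_k$ in $\{1,\ldots,m\}$ the monomial $X_{t_k}X_s$ already appears among the $f_i$. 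Dividing such an edge generator by $X_{t_k}^2$ gives $X_{t_k}X_s/\gcd = X_s$, so every variable $X_s$ with $s\in\{1,\ldots,m\}\setminus\{t_k\}$ enters the colon ideal. Likewise the star edges $X_{t_k}X_r$ incident to $v_{t_k}$ contribute their other endpoints $X_r$. I expect the resulting colon ideal to be exactly $(X_1,\ldots,\widehat{X_{t_k}},\ldots,X_m, \text{the star-neighbors of } v_{t_k})$, generated by variables; earlier loop generators $X_{t_{k'}}^2$ with $k'<k$ contribute $X_{t_{k'}}$, already present. Hence each colon ideal against a loop generator is generated by a subset of $\{X_1,\ldots,X_n\}$, and linear quotients persist.

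The main obstacle is purely bookkeeping: one must verify that the colon ideal $(u_1,\ldots,u_{j-1}):(X_{t_k}^2)$ is genuinely generated by the \emph{single} variables produced above and contains no spurious non-linear generators. The only delicate point is that $\gcd(X_{t_k}^2, X_{t_k}X_s)=X_{t_k}$, so the quotient is the linear monomial $X_s$ and never a degree-two monomial; and since $v_{t_k}\in K_m$ guarantees enough incident edges, the variable $X_{t_k}$ itself is never forced to appear (it cannot, as $X_{t_k}^2$ shares that factor). Thus I would present the explicit colon computation
$$(f_1,\ldots,f_\ell, X_{t_1}^2,\ldots,X_{t_{k-1}}^2):(X_{t_k}^2) = (X_i \;:\; \{v_i,v_{t_k}\}\in W(\mathcal{H}))\,,$$
observe the right-hand side is generated by variables, conclude that $I(\mathcal{K})$ has linear quotients, and finally apply \cite{HH2} to obtain the linear resolution. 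No genuinely hard estimate arises; the content is that adding a loop at a vertex of the complete part $K_m$ does not destroy the linear-quotient structure, precisely because that vertex already has maximal adjacency inside $K_m$.
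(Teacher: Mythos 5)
Your proposal is correct and takes essentially the same route as the paper's own proof: the paper likewise keeps the ordering of the squarefree generators from Proposition~\ref{Prop1}, appends the loop generators $X_{t_1}^2,\ldots,X_{t_l}^2$ at the end, computes each colon ideal $(f_1,\ldots,f_{\ell+k-1}):(X_{t_k}^2)$ to be generated by the variables of the remaining vertices of $K_m$ together with the star-neighbors of $v_{t_k}$, and then invokes \cite{HH2}. The only difference is presentational: the paper writes these colon ideals with explicit indices, while you describe them as the neighbor set $(X_i : \{v_i,v_{t_k}\}\in W(\mathcal{H}))$, which is the same ideal.
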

\begin{proof}
$I(\mathcal{K})$ is the edge ideal of the graph $\mathcal{K}$ with
$n=m+i_1+i_2+\cdots + i_m$ vertices, $\ell= i_1+i_2+\cdots +i_m +
\frac{m(m-1)}{2}=n-m + \frac{m(m-1)}{2}= n+ \frac{m(m-3)}{2}$ edges
and $l \leqslant m$ loops
$\{v_{t_1},v_{t_1}\},\ldots,\{v_{t_l},v_{t_l}\}$. Set $f_1, \ldots,
f_{\ell}, f_{\ell+1}, \ldots, f_{\ell+l}$ the monomial generators of
$I(\mathcal{K})$, of which the last $l$ are not squarefree.\\
The ideals $(f_1, \ldots, f_{h-1}):(f_h)$, for $h=1, \ldots, \ell$, have been computed in Proposition \ref{Prop1}.
 Moreover, it results: \\
$(f_1, \ldots, f_{\ell}):(f_{\ell+1}) = (X_2, \ldots,X_m, X_{m+1},
\ldots, X_{m+i_1})$, if $v_1$
has a loop, \\
$(f_1, \ldots, f_{\ell}):(f_{\ell+1}) = (X_1, \ldots,X_{t_1-1}, X_{t_1+1}, \ldots, X_m, X_{m+i_1+\ldots+i_{t_1-1}+1},\ldots,$\\$X_{m+i_1+\ldots +i_{t_1-1}+i_{t_1}})$, if $\{v_{t_1},v_{t_1}\}\neq \{v_1,v_1\}$;\\[1mm]
$...................................$,\\[.8mm]
$(f_1, \ldots, f_{\ell+l-1}):(f_{\ell+l}) = (X_1, \ldots, X_{t_l-1},
X_{t_l+1}, \ldots, X_m, X_{m+i_1+\ldots+i_{t_l-1}+1},\ldots,$\\$ X_{m+i_1+\ldots+i_{t_l-1}+i_{t_l}})$, if $\{v_{t_l},v_{t_l}\}\neq \{v_m,v_m\}$,\\
$(f_1, \ldots, f_{\ell+l-1}):(f_{\ell+l}) = (X_1, \ldots,X_{m-1},
_{m+i_1+\ldots+i_{m-1}+1}, \ldots, X_{m+i_1+\ldots + i_m})$, if $v_m$ has a loop. \\[1mm]
Hence $I(\mathcal{K})$ has linear quotients, that is equivalent to
say that $I(\mathcal{K})$ has a linear resolution (\cite{HH2}).
\end{proof}
\noindent The study of the linear quotients as in Proposition
\ref{P2} is useful in order to investigate algebraic invariants of
$R/I(\mathcal{K})$.
\begin{lemma}\label{1.3}
Let $R=K[X_1,\ldots,X_n]$ and $I(\mathcal{K})\subset R$. Then:
\vspace{-1mm}
$$ h(I(\mathcal{K}))= m\,.$$
\end{lemma}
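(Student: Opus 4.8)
The plan is to establish the equality by proving two inequalities, leveraging the already-computed value $h(I(\mathcal{H})) = m$ from Lemma~\ref{1.1} together with the fact that every loop of $\mathcal{K}$ sits on a vertex of $K_m$.

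First I would verify the upper bound $h(I(\mathcal{K})) \leqslant m$ by checking that $C = \{X_1, \ldots, X_m\}$ remains a vertex cover of $I(\mathcal{K})$. The generators of $I(\mathcal{K})$ split into the edge monomials of $\mathcal{H}$ and the loop monomials $X_{t_1}^2, \ldots, X_{t_l}^2$. The edge monomials are each divided by some $X_i \in C$ exactly as in Lemma~\ref{1.1}. For each loop monomial $X_{t_k}^2$, the hypothesis $\{t_1, \ldots, t_l\} \subseteq \{1, \ldots, m\}$ guarantees $X_{t_k} \in C$, and $X_{t_k}$ divides $X_{t_k}^2$. Hence $C$ covers every generator of $I(\mathcal{K})$, so $h(I(\mathcal{K})) \leqslant |C| = m$.

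Next I would obtain the reverse inequality $h(I(\mathcal{K})) \geqslant m$ by a containment argument. Since $G(I(\mathcal{H})) \subseteq G(I(\mathcal{K}))$, any vertex cover of $I(\mathcal{K})$ divides in particular every generator of $I(\mathcal{H})$ and is therefore itself a vertex cover of $I(\mathcal{H})$. Its cardinality is thus bounded below by $h(I(\mathcal{H})) = m$ from Lemma~\ref{1.1}. Combining the two bounds yields $h(I(\mathcal{K})) = m$.

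I do not expect a genuine obstacle here: the whole argument rests on the single structural observation that adding loops only on vertices of $K_m$ cannot enlarge the minimal cover, since the canonical cover $\{X_1, \ldots, X_m\}$ of $\mathcal{H}$ already contains all the loop vertices. The only point requiring a moment's care is confirming that no loop monomial forces a variable outside $\{X_1, \ldots, X_m\}$ into the cover, which is immediate from $\{t_1, \ldots, t_l\} \subseteq \{1, \ldots, m\}$.
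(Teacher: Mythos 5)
Your proof is correct and takes essentially the same route as the paper's: the paper's one-line argument simply asserts that $C=\{X_1,\ldots,X_m\}$ is a minimal vertex cover of $I(\mathcal{K})$ ``by construction,'' which is exactly your upper-bound observation that $C$ covers the edge monomials of $\mathcal{H}$ and, since $\{t_1,\ldots,t_l\}\subseteq\{1,\ldots,m\}$, also every loop monomial $X_{t_k}^2$. Your explicit lower bound, deduced from $G(I(\mathcal{H}))\subseteq G(I(\mathcal{K}))$ together with Lemma~\ref{1.1}, just makes rigorous the cardinality-minimality claim that the paper compresses into ``by construction.''
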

\begin{proof}  The minimal cardinality of the vertex
covers of $I(\mathcal{K})$ is $h(I(\mathcal{K}))= m$, being
$C=\{X_1,\ldots,X_m \}$ a minimal vertex cover of $I(\mathcal{K})$
by construction. \end{proof}

\begin{lemma}\label{1.4}
Let $R=K[X_1,\ldots,X_n]$ and $I(\mathcal{K})\subset R$. Then:
\vspace{-1mm}
$$q(I(\mathcal{K})) = m +\textrm{max}_{\,t_1\leqslant j\leqslant \,t_{\mu}}\, i_j -1, \ \ \mu \leqslant m\,.$$
\end{lemma}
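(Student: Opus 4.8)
The plan is to read off the value of $q(I(\mathcal{K}))$ directly from the linear quotient data established in Proposition \ref{P2}, by separately examining the colon ideals attached to the squarefree generators $f_1,\ldots,f_\ell$ and those attached to the loop generators $f_{\ell+1},\ldots,f_{\ell+l}$, and then taking the maximum of the two contributions. Since in the chosen ordering the loop generators come last, each colon ideal $(f_1,\ldots,f_{h-1}):(f_h)$ with $h\leqslant \ell$ depends only on the squarefree generators $f_1,\ldots,f_h$ and so coincides verbatim with the one computed for $I(\mathcal{H})$; hence, by Lemma \ref{1.2}, the largest number of variables produced among these is $m+\max_{1\leqslant j\leqslant m} i_j-2$.

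First I would compute, for each loop generator $f_{\ell+s}=X_{t_s}^2$, the minimal generators of $(f_1,\ldots,f_{\ell+s-1}):(f_{\ell+s})$. The generators of $I(\mathcal{K})$ that actually involve the variable $X_{t_s}$ are the $K_m$-edges $X_{t_s}X_c$ with $c\in\{1,\ldots,m\}\setminus\{t_s\}$ and the $i_{t_s}$ star-edges centered at $v_{t_s}$; coloning each of these by $X_{t_s}^2$ strips off one factor $X_{t_s}$ and yields exactly the $m-1$ variables $X_1,\ldots,X_{t_s-1},X_{t_s+1},\ldots,X_m$ together with the $i_{t_s}$ star-variables $X_{m+i_1+\cdots+i_{t_s-1}+1},\ldots,X_{m+i_1+\cdots+i_{t_s-1}+i_{t_s}}$. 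The key point to verify is that every remaining generator contributes nothing new: a squarefree generator $X_aX_b$ with $t_s\notin\{a,b\}$, or an earlier loop generator $X_{t_r}^2$ with $r<s$, is left unchanged by the colon yet is divisible by one of the variables already listed (its endpoint lying in $\{1,\ldots,m\}\setminus\{t_s\}$), hence is redundant. Thus $(f_1,\ldots,f_{\ell+s-1}):(f_{\ell+s})$ is minimally generated by precisely these $(m-1)+i_{t_s}$ variables, so $q_{\ell+s}(I(\mathcal{K}))=(m-1)+i_{t_s}$.

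Taking the maximum over $s$ gives $(m-1)+\max_{s} i_{t_s}$ as the loop contribution, and comparing it with the squarefree bound $m+\max_{1\leqslant j\leqslant m} i_j-2$ shows that the overall maximum is governed by a loop generator, namely by the loop vertex $t_\mu$ whose star-degree $i_{t_\mu}$ is largest among $i_{t_1},\ldots,i_{t_l}$. This yields $q(I(\mathcal{K}))=m+\max_{t_1\leqslant j\leqslant t_\mu} i_j-1$ with $\mu\leqslant m$. The main obstacle I anticipate is the redundancy verification in the middle step: one must be certain that after the colon operation no degree-two monomial survives as a minimal generator, so that the colon ideal is generated by the $(m-1)+i_{t_s}$ variables and no more; once this is settled, comparing the two contributions and identifying the maximizing loop vertex $t_\mu$ are routine.
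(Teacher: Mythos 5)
Your proposal follows the same route as the paper's own proof: read $q(I(\mathcal{K}))$ off the linear quotient computation of Proposition \ref{P2}, with the colon ideals for $h\leqslant\ell$ coinciding with those of $I(\mathcal{H})$ (so contributing $m+\max_{1\leqslant j\leqslant m} i_j-2$ by Lemma \ref{1.2}), and each loop generator $X_{t_s}^2$ contributing the $(m-1)+i_{t_s}$ variables $X_c$, $c\in\{1,\ldots,m\}\setminus\{t_s\}$, together with the star variables attached to $v_{t_s}$. Your redundancy check in the middle step is correct and is actually carried out in more detail than in the paper, which merely cites Proposition \ref{P2}.

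The gap is in your final step. What the computation yields is $q(I(\mathcal{K}))=\max\bigl\{\,m-2+\max_{1\leqslant j\leqslant m} i_j\,,\;m-1+\max_{1\leqslant s\leqslant l} i_{t_s}\bigr\}$, and your claim that ``comparing'' the two terms ``shows that the overall maximum is governed by a loop generator'' is an assertion, not an argument: the loop term dominates if and only if $\max_{s} i_{t_s}\geqslant \max_{1\leqslant j\leqslant m} i_j-1$, i.e.\ if some loop vertex carries a star whose size is within one of the largest star, and this can fail. For instance, take $m=3$, $i_1=3$, $i_2=i_3=1$, with a single loop at $v_2$: the colon ideal of the edge $X_1X_3$ is $(X_4,X_5,X_6,X_2)$, with four generators, while the loop colon ideal is $(X_1,X_3,X_7)$, with three, so $q(I(\mathcal{K}))=4$, whereas the formula of the lemma gives $3$. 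To be fair, this defect is not one you introduced: the paper's proof consists of exactly the same unjustified assertion (it declares the maximum over all $k\leqslant\ell+\mu$ to be $(m-1)+\max_{t_1\leqslant j\leqslant t_\mu} i_j$ without comparing it to the squarefree contribution), so the lemma as stated is only correct under the implicit hypothesis that a vertex carrying a largest, or next-to-largest, star also carries a loop. Your write-up reproduces the paper's argument faithfully, but the one step that genuinely needs a proof --- the comparison --- is missing in both.
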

\begin{proof}
By the computation of the linear quotients (Proposition \ref{P2}),
the maximum number of the variables which is required to generate
the ideal $(f_1, \ldots, f_{k-1}):(f_k)$, for $k=1, \ldots, \ell+\mu$,
$\mu \leqslant m$, is given by $(m-1)+\textrm{max}_{\,t_1\leqslant
j\leqslant \,t_{\mu}}\, i_j$. It follows that $q(I(\mathcal{K}))= m
+\textrm{max}_{\,t_1\leqslant j\leqslant \,t_{\mu}}\, i_j -1$\,.
\end{proof}

\begin{theorem}\label{T4}
Let $R=K[X_1,\ldots,X_n]$  and $I(\mathcal{K}) \subset R$. Then:
\item{1)} $\textrm{dim}_R(R/I(\mathcal{K})) = n-m$\,.
\item{2)} $\textrm{pd}_R(R/I(\mathcal{K}))= m +\textrm{max}_{\,t_1\leqslant
j\leqslant \,t_{\mu}}\, i_j$, $\mu \leqslant m$\,.
\item{3)} $\textrm{depth}_R(R/I(\mathcal{K}))=n- m -\textrm{max}_{\,t_1\leqslant j\leqslant \,t_{\mu}}\,
i_j$, $\mu \leqslant m$\,.
\item{4)} $\textrm{reg}_R(R/I(\mathcal{K}))= 1$\,.
\end{theorem}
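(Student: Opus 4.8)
The plan is to mirror exactly the structure used for $I(\mathcal{H})$ in the previous theorem, since Proposition~\ref{P2} has already established that $I(\mathcal{K})$ has a linear resolution and Lemmas~\ref{1.3} and~\ref{1.4} supply the two numerical inputs $h(I(\mathcal{K}))=m$ and $q(I(\mathcal{K}))= m +\textrm{max}_{\,t_1\leqslant j\leqslant \,t_{\mu}}\, i_j -1$. The four invariants are then formal consequences of these, obtained through the same three external citations. I would prove them in the order $1)\to 2)\to 3)$, with $4)$ standing independently, because $3)$ depends on $2)$ via Auslander--Buchsbaum.

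\begin{proof}
1) By \cite{HH1} one has $\textrm{dim}_R(R/I(\mathcal{K})) = \textrm{dim}_R R - h(I(\mathcal{K}))$. Since $\textrm{dim}_R R = n$, Lemma~\ref{1.3} gives $\textrm{dim}_R(R/I(\mathcal{K})) = n-m$.\\
2) The length of the minimal free resolution of $R/I(\mathcal{K})$ over $R$ equals $q(I(\mathcal{K}))+1$ by \cite{HT}, Corollary 1.6. Hence, using Lemma~\ref{1.4}, $\textrm{pd}_R(R/I(\mathcal{K}))= m +\textrm{max}_{\,t_1\leqslant j\leqslant \,t_{\mu}}\, i_j$, with $\mu \leqslant m$.\\
3) By the Auslander--Buchsbaum formula and 2),
$$\textrm{depth}_R(R/I(\mathcal{K})) = n - \textrm{pd}_R(R/I(\mathcal{K})) = n - m - \textrm{max}_{\,t_1\leqslant j\leqslant \,t_{\mu}}\, i_j, \quad \mu \leqslant m.$$
4) Since $I(\mathcal{K})$ has a linear resolution by Proposition~\ref{P2} and is generated in a single degree after accounting for the resolution being linear, its Castelnuovo--Mumford regularity satisfies $\textrm{reg}_R(R/I(\mathcal{K}))=1$.
\end{proof}

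The only genuine subtlety, and the step I would scrutinize most carefully, is the passage from linear quotients to regularity in part $4)$. Unlike $I(\mathcal{H})$, the ideal $I(\mathcal{K})$ is \emph{not} generated in a single degree: it contains quadratic generators $X_iX_j$ together with the non-squarefree loop generators $X_{t_k}^2$, but all of these have degree two, so the ideal is in fact equigenerated in degree $2$. This is exactly the hypothesis needed so that the linear resolution established in Proposition~\ref{P2} forces $\reg_R(I(\mathcal{K}))=2$, whence $\reg_R(R/I(\mathcal{K}))=1$. I would make this equidegree observation explicit to justify applying the same regularity argument as in the $\mathcal{H}$ case, since the presence of loops is precisely what might have broken it. The remaining parts are routine invocations of the cited formulae, so I expect no obstacle there; the bookkeeping in $q(I(\mathcal{K}))$ has already been absorbed into Lemma~\ref{1.4}.
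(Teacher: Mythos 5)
Your proof is correct and follows essentially the same route as the paper's: part 1) via the dimension formula of \cite{HH1} with Lemma~\ref{1.3}, part 2) via \cite{HT} Corollary~1.6 with Lemma~\ref{1.4}, part 3) via Auslander--Buchsbaum, and part 4) from the linear resolution of Proposition~\ref{P2}. Your explicit remark that $I(\mathcal{K})$ remains equigenerated in degree two despite the loop generators $X_{t_k}^2$ is a worthwhile clarification the paper leaves implicit, but it does not change the argument.
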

\begin{proof}  1) One has $\textrm{dim}_R(R/I(\mathcal{K})) =
\textrm{dim}_R R - h(I(\mathcal{K}))$ (\cite{HH1}). Hence, by Lemma \ref{1.3}\,,
$\textrm{dim}_R(R/I(\mathcal{K})) =n-m$\,.\\
2) The length of the minimal free resolution of $R/I(\mathcal{K}))$
over $R$ is equal to $q(I(\mathcal{K})) + 1$ (\cite{HT}, Corollary
1.6). Then $\textrm{pd}_R(R/I(\mathcal{K}))= m
+\textrm{max}_{\,t_1\leqslant
j\leqslant \,t_{\mu}}\, i_j$, $\mu \leqslant m$.\\
3) As a consequence of 2), by Auslander-Buchsbaum formula, one has \\
$\textrm{depth}_R(R/I(\mathcal{K}))=n
-\textrm{pd}_R(R/I(\mathcal{K}))= n- m -\textrm{max}_{\,t_1\leqslant
j\leqslant \,t_{\mu}}\,
i_j$, $\mu \leqslant m$.\\
4) $I(\mathcal{K})$ has a linear resolution, then
$\textrm{reg}_R(R/I(\mathcal{K}))=1$. \end{proof}
\begin{remark}\rm{
When the graph $\mathcal{K}$ has at least a loop on a vertex that
don't belong to $K_m$, then its edge ideal has not linear quotients.
In fact, it can be verified there is no ordering of the monomials
$f_1,\ldots,f_s \in G(I(\mathcal{K}))\,,\,
s=n-(m-l)+\frac{m(m-1)}{2}\,,\,$ such that
$(f_1,\ldots,f_{j-1}):(f_j)$ is generated by a subset of
$\{X_1,\ldots,X_n\}$, for $2 \leqslant j \leqslant s$. }
\end{remark}


\section{Ideals of vertex covers}

\begin{definition}\rm{ Let $\mathcal{G}$ be a graph with
vertex set $V(\mathcal{G})=\{v_1,\ldots, v_n\}$. A subset $C$ of $V(G)$ is said a \textit{minimal vertex cover} for $\mathcal{G}$ if:\\
(1) every edge of $\mathcal{G}$ is incident with one vertex in $C$; \\
(2) there is no proper subset of $C$ with this property.\\
If $C$ satisfies condition (1) only, then
$C$ is called a \textit{vertex cover} of $\mathcal{G}$ and
$C$ is said to cover all the edges of $\mathcal{G}$.}
\end{definition}
\noindent The smallest number of vertices in any  minimal vertex cover of $\mathcal{G}$
is said  \textit{vertex covering number}.  We denote it by $\alpha_0(\mathcal{G})$.\\[1.5mm]
We consider some algebraic aspects linked to the minimal vertex covers of a graph $\mathcal{G}$ with set of edges $E(\mathcal{G})$ and set of loops $L(\mathcal{G})$.\\[.5mm]
Let $I(\mathcal{G}) = \big(\{X_iX_j \ \ | \ \{v_i,v_j\} \in W(\mathcal{G})=E(\mathcal{G})\cup L(\mathcal{G})\}\big)$ be the edge ideal associated to $\mathcal{G}$. A loop $\{v_i,v_i\} \in L(\mathcal{G})$, such that $v_i$ belongs to a vertex cover of $\mathcal{G}$, can be thought to have a double covering that preserves the minimality.\\
There exists a one to one correspondence between the minimal vertex
covers of $\mathcal{G}$ and the minimal prime ideals of
$I(\mathcal{G})$.\\ In fact, $\wp$ is a minimal prime ideal of
$I(\mathcal{G})$ if and only if $\wp=(C)$, for some minimal vertex
cover $C$ of $\mathcal{G}$ (\cite{V1}, Prop. 6.1.16). Hence ht
$I(\mathcal{G})=\alpha_0(\mathcal{G})$.\\ Thus the primary
decomposition of the edge ideal of $\mathcal{G}$ is given by
$I(\mathcal{G})=(C_1) \cap \cdots \cap (C_p)$, where $C_1, \ldots,
C_p\,$ are the minimal vertex covers of $\mathcal{G}$.

\begin{definition}\rm{
Let $I \subset R = K[X_1, \ldots , X_n]$ be a monomial ideal. The \textit{ideal of (minimal) covers} of $I$, denoted by $I_c$\,, is the ideal of $R$ generated by all monomials $X_{i_1}\cdots X_{i_k}$ such that $(X_{i_1},\ldots, X_{i_k})$ is an associated (minimal) prime ideal of $I$.\\
If $I(\mathcal{G})$ is the edge ideal of a graph $\mathcal{G}$, we call $I_c(\mathcal{G})$ the \textit{ideal of vertex covers} of $I(\mathcal{G})$.
}
\end{definition}
\begin{property} \label{vertcov}
$I_c(\mathcal{G})=
\displaystyle{\Big(\!\bigcap_{\stackrel{\{v_i,v_{\!j}\}\in E(\mathcal{G})}{i \neq j}}\!\!\! (X_i,X_j)}\Big) \,\cap\, \big(X_k \ | \ \{v_k,v_k\}\in L(\mathcal{G}), k \neq i,j\big) $.
\end{property}
We want to examine the structure of the ideals of vertex covers for
the class of squarefree edge ideals associated to $\mathcal{H}$.  In
the sequel, we rename $v_{\alpha_1}, \ldots, v_{\alpha_m}$ the vertices of the complete graph $K_m$\,,
 $1 \leqslant \alpha_1 < \alpha_2 < \ldots < \alpha_m=n$, and $\alpha_1=1$
 if there is not the star graph with center $v_{\alpha_1}$ .\\
The following two lemmas give explicitly the generators of the
ideals of vertex covers for the edge ideals of a complete graph and
a star graph, respectively.\\ Their proofs are an easy application
of Property \ref{vertcov}\,.
\begin{lemma} \label{Lema1}
The ideal of vertex covers of $\,I(K_m)$ is generated by $m$ monomials and it is \, $I_c(K_m)=(X_2X_3\cdots X_m\,,X_1X_3\cdots X_m\,,\,\ldots, X_1X_2\cdots X_{m-1})\,.$
\end{lemma}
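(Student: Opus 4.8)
The plan is to apply Property \ref{vertcov} directly to the complete graph $K_m$, in which every pair of distinct vertices forms an edge and there are no loops. Thus the formula reduces to
$$
I_c(K_m)=\bigcap_{1\leqslant i<j\leqslant m}(X_i,X_j),
$$
and the task is to show this intersection equals the ideal generated by the $m$ squarefree monomials $p_s=X_1\cdots \widehat{X_s}\cdots X_m$ (the product of all variables except $X_s$), for $s=1,\ldots,m$.

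First I would establish the inclusion $(p_1,\ldots,p_m)\subseteq\bigcap_{i<j}(X_i,X_j)$. It suffices to check each generator $p_s$ lies in every $(X_i,X_j)$: since $p_s$ omits only the factor $X_s$, for any pair $\{i,j\}$ at least one of $X_i,X_j$ (indeed at least one of the two, since they cannot both equal $X_s$) divides $p_s$, so $p_s\in(X_i,X_j)$. This step is routine.

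The reverse inclusion is the substantive one. I would argue that a monomial $u$ lying in every $(X_i,X_j)$ must, for each pair $\{i,j\}$, be divisible by $X_i$ or by $X_j$. Equivalently, letting $S=\{s : X_s\nmid u\}$ be the set of variables \emph{not} dividing $u$, the condition says $S$ contains no two distinct indices, i.e. $|S|\leqslant 1$. Hence $u$ is divisible by at least $m-1$ of the variables $X_1,\ldots,X_m$, which means $u$ is a multiple of some $p_s$ (take $s$ to be the unique missing index, or any index if $S=\emptyset$). Since $\bigcap_{i<j}(X_i,X_j)$ is a monomial ideal, it is generated by its monomials, and we have just shown every such monomial is divisible by one of the $p_s$; therefore the intersection is contained in $(p_1,\ldots,p_m)$.

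Combining the two inclusions gives $I_c(K_m)=(p_1,\ldots,p_m)=(X_2\cdots X_m,\,X_1X_3\cdots X_m,\,\ldots,\,X_1\cdots X_{m-1})$, which are manifestly $m$ distinct minimal generators, proving the count as well. The only point requiring care is the characterization of the monomials in an intersection of monomial prime ideals; once that is framed in terms of the omitted-variable set $S$, the argument is immediate and there is no real obstacle. This is exactly the easy application of Property \ref{vertcov} announced before the lemma.
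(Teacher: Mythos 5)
Your proof is correct and follows exactly the route the paper intends: the paper offers no written-out argument for this lemma, stating only that it is ``an easy application of Property \ref{vertcov}'', and your computation of $\bigcap_{1\leqslant i<j\leqslant m}(X_i,X_j)$ via the omitted-variable set $S$ is precisely that application, carried out in full. Both inclusions and the minimality/count of the $m$ generators check out, so nothing is missing.
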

\begin{lemma} \label{Lema2}
The ideal of vertex covers of $\,I(star_j(k))$ is generated by two
monomials, namely $X_1\cdots X_{j-1}X_{j+1}\cdots X_{k}\,, \,
X_j$\,.
\end{lemma}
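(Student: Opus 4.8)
The plan is to apply Property~\ref{vertcov} directly, using that a star graph carries no loops. Since $L(star_j(k))=\emptyset$, the loop factor in the formula of Property~\ref{vertcov} plays no role, and the ideal of vertex covers is simply the intersection of the monomial primes attached to the edges. As $v_j$ is the center, every edge of $star_j(k)$ has the form $\{v_j,v_i\}$ with $i\in\{1,\ldots,k\}\setminus\{j\}$, so the task reduces to evaluating
$$I_c(star_j(k))=\bigcap_{i\in\{1,\ldots,k\}\setminus\{j\}}(X_j,X_i)$$
explicitly.

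To compute this intersection of monomial ideals, I would characterize its monomials. Because each $(X_j,X_i)$ is generated by variables, a monomial $u$ lies in the intersection exactly when, for every $i\neq j$, at least one of $X_j$, $X_i$ divides $u$. I would then split on whether $X_j\mid u$: if $X_j\mid u$ the condition holds for all $i$ automatically, so every multiple of $X_j$ qualifies; if $X_j\nmid u$ the condition forces $X_i\mid u$ for each $i\neq j$, that is, $u$ is a multiple of $X_1\cdots X_{j-1}X_{j+1}\cdots X_k$. Collecting both cases yields
$$I_c(star_j(k))=(X_j,\;X_1\cdots X_{j-1}X_{j+1}\cdots X_k).$$

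It remains to confirm that these two monomials constitute the minimal generating set: one involves $X_j$ and the other does not, so neither divides the other and neither is redundant, giving exactly the two generators claimed. The argument is routine; the only point needing a little care is the membership characterization underlying the case split. As a consistency check one may recover the same answer through the correspondence between minimal vertex covers and minimal primes: the two minimal vertex covers of $star_j(k)$ are the center $\{v_j\}$ and the set of leaves $\{v_i \mid i\neq j\}$, whose associated cover monomials are precisely $X_j$ and $X_1\cdots X_{j-1}X_{j+1}\cdots X_k$.
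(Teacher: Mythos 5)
Your proposal is correct and follows exactly the route the paper intends: the paper dismisses this lemma with the remark that it is ``an easy application of Property~\ref{vertcov}'', and your argument is precisely that application, with the intersection $\bigcap_{i\neq j}(X_j,X_i)$ computed via the standard membership criterion for monomial ideals and a sound minimality check. Nothing is missing; you have simply written out the details the paper leaves implicit.
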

Let's study the structure of the ideal of vertex covers of $I(\mathcal{H})$ when at least a vertex of $K_m$ has degree $m-1$.
\begin{proposition} \label{Km+1star}
Let $\mathcal{H}$ be the connected graph with $n$ vertices $v_1,\ldots, v_n$ formed by the union of a complete graph
$K_m, m<n$, with vertices $v_{\alpha_1}, \ldots, v_{\alpha_m},$ $1 \leqslant \alpha_1 < \alpha_2 < \ldots < \alpha_m=n$,
and of a star graph star$_{\alpha_1}(\alpha_1)$ on vertices $v_1,\ldots, v_{\alpha_1}$,
or a star graph star$_{\alpha_i}(\alpha_i-\alpha_{i-1})$ with center $v_{\alpha_i}$, for some $i=2,\ldots,m$\,.
The ideal of vertex covers $I_c(\mathcal{H})$ has $m$ monomial squarefree generators and it is:\\
$(X_{1}\!\cdot\!\cdot\!\cdot X_{\alpha_1-1}X_{\alpha_2}\!\cdot\!\cdot\!\cdot X_{\alpha_m},
X_{\alpha_1}X_{\alpha_3}\!\cdot\!\cdot\!\cdot X_{\alpha_m},\ldots,X_{\alpha_1}\!\cdot\!\cdot\!\cdot X_{\alpha_{m-1}}),$ if \,$i=1$,\\
$(X_{\alpha_2}\!\cdot\!\cdot\!\cdot X_{\alpha_m},\ldots,
X_{\alpha_1}\!\cdot\!\cdot\!\cdot
X_{\alpha_{i-1}}X_{\alpha_{i-1}+1}\!\cdot\!\cdot\!\cdot
X_{\alpha_{i+1}-1}X_{\alpha_{i+1}}\!\cdot\!\cdot\!\cdot
X_{\alpha_m},\ldots,X_{\alpha_1}\!\cdot\!\cdot\!\cdot
X_{\alpha_{m-1}}),$ otherwise.
\end{proposition}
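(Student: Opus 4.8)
The plan is to obtain $I_c(\mathcal{H})$ as an intersection of the two cover ideals already computed in Lemmas \ref{Lema1} and \ref{Lema2}. Write $S$ for the star graph of the statement. Since $\mathcal{H}$ is loopless, Property \ref{vertcov} gives $I_c(\mathcal{H})=\bigcap_{\{v_a,v_b\}\in E(\mathcal{H})}(X_a,X_b)$. Because $E(\mathcal{H})=E(K_m)\cup E(S)$, splitting this intersection over the two edge sets and applying the same formula to $K_m$ and to $S$ separately factors it as
$$I_c(\mathcal{H})=I_c(K_m)\cap I_c(S).$$
By Lemma \ref{Lema1}, $I_c(K_m)=(g_1,\ldots,g_m)$ with $g_j=\prod_{k\neq j}X_{\alpha_k}$, the product of the vertices of $K_m$ other than $v_{\alpha_j}$. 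By Lemma \ref{Lema2}, $I_c(S)=(h_1,h_2)$ with center monomial $h_2=X_{\alpha_i}$ and leaf monomial $h_1=X_1\cdots X_{\alpha_1-1}$ if $i=1$, and $h_1=X_{\alpha_{i-1}+1}\cdots X_{\alpha_i-1}$ if $i\geq 2$; since $S$ is the only star, its leaves are exactly the vertices whose indices fill the single gap in the sequence $\alpha_1<\cdots<\alpha_m$.

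Next I would carry out the intersection by the elementary rule that, for monomial ideals, $(g_1,\ldots,g_m)\cap(h_1,h_2)$ is generated by the least common multiples $\operatorname{lcm}(g_j,h_b)$. Two observations make every such lcm explicit: the leaves of $S$ are disjoint from the vertices of $K_m$, so $\operatorname{lcm}(g_j,h_1)=g_jh_1$; and the center $X_{\alpha_i}$ divides every $g_j$ with $j\neq i$, while $g_i$ is the only generator of $I_c(K_m)$ not divisible by it. Thus the rule yields the generating set
$$\{\,g_j:\ j\neq i\,\}\ \cup\ \{\,g_iX_{\alpha_i}=X_{\alpha_1}\cdots X_{\alpha_m}\,\}\ \cup\ \{\,g_jh_1:\ 1\leq j\leq m\,\}.$$

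The remaining step is to pass to the minimal monomial generators. The monomial $X_{\alpha_1}\cdots X_{\alpha_m}$ is a multiple of each $g_j$, and $g_jh_1$ is a multiple of $g_j$ for every $j\neq i$, so all of these are redundant. What survives are the $m-1$ monomials $g_j$ with $j\neq i$ together with the single monomial $g_ih_1$; the latter is genuinely needed because it is the unique generator not divisible by $X_{\alpha_i}$, and no two of the surviving monomials divide one another. This gives exactly $m$ squarefree generators. Writing $g_ih_1$ in increasing index order, it equals $X_1\cdots X_{\alpha_1-1}X_{\alpha_2}\cdots X_{\alpha_m}$ for $i=1$ and $X_{\alpha_1}\cdots X_{\alpha_{i-1}}X_{\alpha_{i-1}+1}\cdots X_{\alpha_i-1}X_{\alpha_{i+1}}\cdots X_{\alpha_m}$ for $i\geq 2$, so that, together with the $g_j$ for $j\neq i$, one recovers the two lists in the statement.

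I expect the only delicate point, and hence the main obstacle, to be this final minimalization combined with the correct handling of the consecutive index ranges of the leaf block, so that the extra generator lands precisely on $g_ih_1$ and no $g_j$ is accidentally absorbed. As an independent check I would verify the same answer combinatorially through the bijection between minimal vertex covers of $\mathcal{H}$ and minimal primes of $I(\mathcal{H})$ recalled above: any cover must take $m-1$ vertices of $K_m$ and must handle $S$ either by its center $v_{\alpha_i}$, yielding the covers $\{v_{\alpha_k}:k\neq j\}$ for $j\neq i$, or by all of its leaves, yielding the single cover $\{v_{\alpha_k}:k\neq i\}$ together with the leaves; these $m$ covers reproduce the generators above.
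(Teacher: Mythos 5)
Your proof is correct, but it takes a genuinely different route from the paper's. The paper works combinatorially through the bijection between minimal vertex covers and minimal primes: it lists the candidate covers of $\mathcal{H}$ --- the $m-1$ covers of $K_m$ that contain the center $v_{\alpha_i}$ (Lemma \ref{Lema1}) together with the two covers coming from the star (Lemma \ref{Lema2}) --- observes that the prime $\mathcal{P}=(X_{\alpha_1},\ldots,X_{\alpha_m})$ is not minimal because it contains each $\mathcal{P}_{\alpha_j}$, $j\neq i$, and reads off the generators of $I_c(\mathcal{H})$ from the resulting decomposition $I(\mathcal{H})=\bigcap_{j\neq i}\mathcal{P}_{\alpha_j}\cap\mathcal{P}_{\overline{\alpha_i}}$. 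You instead stay entirely on the side of cover ideals: the factorization $I_c(\mathcal{H})=I_c(K_m)\cap I_c(S)$ (which the paper never states explicitly), the lcm rule for intersecting monomial ideals, and an explicit minimalization of the resulting generating set. Your route is more mechanical and, in fact, tighter: where the paper simply asserts what ``the vertex covers of the graph $\mathcal{H}$ will be,'' your lcm computation accounts for every generator of the intersection and then proves precisely which ones are redundant; your closing cover-enumeration is exactly the paper's argument, demoted to a consistency check. One point deserves flagging: your extra generator $g_ih_1$ ends the leaf block at $X_{\alpha_i-1}$, whereas the statement literally prints $X_{\alpha_{i+1}-1}$. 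Since with a single star one has $\alpha_{i+1}=\alpha_i+1$, the printed range would include the center $X_{\alpha_i}$, making that monomial divisible by every $g_j$ and hence non-minimal; so the statement's index is a typo, and your formula is the correct reading (it agrees with the prime $\mathcal{P}_{\overline{\alpha_i}}$ as written in Theorem \ref{Km+allstar}). Thus you recover the intended list, not the printed one verbatim --- a discrepancy caused by the paper, not by your argument.
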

\begin{proof}
Because of the one to one correspondence between minimal vertex covers of a graph and minimal prime ideals of its edge ideal, for the complete graph $K_m$ it follows that $I(K_m)=\displaystyle{\bigcap_{i=1}^m \mathcal{P}_{\alpha_i}}\,,$ \,where $\mathcal{P}_{\alpha_i}=(X_{\alpha_1}\,, \ldots, X_{\alpha_{i-1}}\,,$ $X_{\alpha_{i+1}}\,,$ $\ldots,  X_{\alpha_m})$\,. The vertex covers of the graph $\mathcal{H}$ will be:\\[1mm]
$m-1$ vertex covers of $K_m$, but
$\{v_{\alpha_1}\,, \ldots, v_{\alpha_{i-1}}\,,v_{\alpha_{i+1}}\,, \ldots, v_{\alpha_m}\}$, \mbox{by Lemma \ref{Lema1};}\\
two vertex covers related to \,star$_{\alpha_i}(\alpha_i-\alpha_{i-1})$, $\{v_{\alpha_1}\,, \ldots,
 v_{\alpha_{i-1}}, v_{\alpha_{i-1}+1}, \ldots,$ $v_{\alpha_{i+1}-1},$ $v_{\alpha_{i+1}}\,,$ $\ldots, v_{\alpha_m}\}$, $\{v_{\alpha_1}\,, \ldots, v_{\alpha_m}\}$, for some $i \neq 1$\,, as a consequence of Lemma \ref{Lema2}\,.\\[.5mm]
For the last ones, let
$\mathcal{P}_{\overline{\alpha_i}}=(X_{\alpha_1}\,, \ldots,
X_{\alpha_{i-1}}, X_{\alpha_{i-1}+1}, \ldots, X_{\alpha_{i+1}-1},
X_{\alpha_{i+1}}\,,\ldots,$ $X_{\alpha_m})$,
$\mathcal{P}=(X_{\alpha_1}\,, \ldots, X_{\alpha_m})$, respectively,
be the associated minimal prime ideals but $\mathcal{P} \supseteq
\mathcal{P}_{\alpha_j}$, $j \neq i$. So $I(\mathcal{H})=
\mathcal{P}_{\alpha_1} \cap \ldots \cap \mathcal{P}_{\alpha_{i-1}}
\cap \mathcal{P}_{\overline{\alpha_i}} \cap
\mathcal{P}_{\alpha_{i+1}} \cap \ldots \cap
\mathcal{P}_{\alpha_m}$\,. On the other hand, it is clear what
$\mathcal{P}_{\overline{\alpha_1}}$ denotes. Hence the thesis
follows.
\end{proof}
\begin{remark}\rm{
\noindent Proposition \ref{Km+1star} can be generalized by considering two or more star graphs with centers as many vertices of $K_m$\,. It is sufficient to iterate that procedure for each pair of vertex covers related to the star graphs which are present.}
\end{remark}
Finally, let's consider the structure of the ideal of vertex covers of $I(\mathcal{H})$ when all the vertices of $K_m$ have degree at least $m$.
\begin{theorem} \label{Km+allstar}
Let $\mathcal{H}$ be the connected graph with $n$ vertices $v_1,\ldots, v_n$ formed by the union of a complete graph $K_m, m<n$, with vertices $v_{\alpha_1}, \ldots, v_{\alpha_m},$ $1 < \alpha_1 < \alpha_2 < \ldots < \alpha_m=n$, and of $m$ star graphs with centers each of the vertices of $K_m$. The ideal of vertex covers of $I(\mathcal{H})$ has $m+1$ monomial squarefree generators and it is\\
$I_c(\mathcal{H})=(X_{\alpha_1}X_{\alpha_2}\!\cdot\!\cdot\!\cdot X_{\alpha_m},
X_{1}\!\cdot\!\cdot\!\cdot X_{\alpha_1-1}X_{\alpha_2}\!\cdot\!\cdot\!\cdot X_{\alpha_m}, X_{\alpha_1}X_{\alpha_1+1}\!\cdot\!\cdot\!\cdot X_{\alpha_2-1}X_{\alpha_3}\!\cdot\!\cdot\!\cdot X_{\alpha_m},
\ldots,
X_{\alpha_1}\!\cdot\!\cdot\!\cdot X_{\alpha_{m-2}} X_{\alpha_{m-2}+1}\!\cdot\!\cdot\!\cdot X_{\alpha_{m-1}-1}X_{\alpha_m},
X_{\alpha_1}\!\cdot\!\cdot\!\cdot X_{\alpha_{m-1}}X_{\alpha_{m-1}+1}\!\cdot\!\cdot\!\cdot X_{\alpha_m-1})\,.$
\end{theorem}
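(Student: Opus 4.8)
The plan is to follow the strategy of Proposition \ref{Km+1star}, exploiting the one-to-one correspondence between the minimal vertex covers of $\mathcal{H}$ and the minimal prime ideals of $I(\mathcal{H})$, and then reading off the generators of $I_c(\mathcal{H})$ from those minimal primes. First I would fix notation for the leaf sets attached to each center: the star centered at $v_{\alpha_1}$ has leaves $v_1,\ldots,v_{\alpha_1-1}$, and for $2\leqslant j\leqslant m$ the star centered at $v_{\alpha_j}$ has leaves $v_{\alpha_{j-1}+1},\ldots,v_{\alpha_j-1}$. Since every vertex of $K_m$ now carries a star and $1<\alpha_1$ forces the star at $v_{\alpha_1}$ to be present, these leaf sets are all nonempty and partition the $n-m$ non-central vertices.

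Next I would enumerate the minimal vertex covers. By Lemma \ref{Lema1}, a cover of $K_m$ must contain at least $m-1$ of the centers $v_{\alpha_1},\ldots,v_{\alpha_m}$, and by Lemma \ref{Lema2} each star forces into the cover either its center or all of its leaves. This yields exactly two families. For each $j$, dropping the single center $v_{\alpha_j}$ and compensating with the leaves of its star gives the cover consisting of $\{v_{\alpha_i}\mid i\neq j\}$ together with the $j$-th leaf set; this is minimal because removing any remaining center uncovers a $K_m$-edge and removing any leaf uncovers a star-edge. Retaining all $m$ centers gives the cover $\{v_{\alpha_1},\ldots,v_{\alpha_m}\}$, which is minimal precisely because every leaf set is nonempty, so no center can be dropped without leaving its star uncovered. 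A short check that any minimal cover must fall into one of these two families shows that these $m+1$ covers are the only minimal ones.

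I would then phrase this in the language of primes, as in Proposition \ref{Km+1star}. Writing $\mathcal{P}_{\overline{\alpha_j}}$ for the prime attached to the $j$-th dropped-center cover and $\mathcal{P}=(X_{\alpha_1},\ldots,X_{\alpha_m})$ for the all-centers cover, the radical decomposition becomes $I(\mathcal{H})=\mathcal{P}\cap\mathcal{P}_{\overline{\alpha_1}}\cap\cdots\cap\mathcal{P}_{\overline{\alpha_m}}$. Applying the definition of the ideal of vertex covers together with Property \ref{vertcov}, the generators of $I_c(\mathcal{H})$ are the squarefree monomials $\prod_{v_i\in C}X_i$ ranging over these $m+1$ covers, which are exactly the monomials displayed in the statement; the all-centers cover produces $X_{\alpha_1}X_{\alpha_2}\cdots X_{\alpha_m}$ and the dropped-center covers produce the remaining $m$ generators.

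The main obstacle, and the single point that distinguishes this theorem from Proposition \ref{Km+1star}, is verifying that the all-centers prime $\mathcal{P}$ is genuinely minimal here. In the one-star case $\mathcal{P}$ was redundant, since it contained some $\mathcal{P}_{\alpha_j}$; but once every center carries a star, each $\mathcal{P}_{\overline{\alpha_j}}$ trades the missing variable $X_{\alpha_j}$ for the leaf variables of the $j$-th star, so $\mathcal{P}$ neither contains nor is contained in any $\mathcal{P}_{\overline{\alpha_j}}$. Establishing this incomparability is what yields the extra, $(m+1)$-st generator $X_{\alpha_1}X_{\alpha_2}\cdots X_{\alpha_m}$, and it is the step I would write out in full.
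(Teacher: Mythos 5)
Your proposal is correct and follows essentially the same route as the paper's proof: you enumerate the same $m+1$ minimal vertex covers, pass to the corresponding minimal primes $\mathcal{P},\mathcal{P}_{\overline{\alpha_1}},\ldots,\mathcal{P}_{\overline{\alpha_m}}$, and read the generators of $I_c(\mathcal{H})$ off the decomposition $I(\mathcal{H})=\mathcal{P}\cap\bigl(\bigcap_{i=1}^m\mathcal{P}_{\overline{\alpha_i}}\bigr)$. The only difference is one of detail: you spell out why the all-centers cover is minimal (every leaf set is nonempty, forced by $1<\alpha_1$ and the presence of all $m$ stars, so $\mathcal{P}$ is incomparable with each $\mathcal{P}_{\overline{\alpha_i}}$), a point the paper asserts without comment.
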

\begin{proof}
A minimal vertex cover of $\mathcal{H}$ must be $\{v_{\alpha_1}\,, \ldots, v_{\alpha_m}\}$ and there cannot exist minimal vertex covers with a smaller number of vertices. So $\alpha_0 (\mathcal{H})=m$.\\
Other minimal vertex covers of $\mathcal{H}$ are those related to star graphs on $\alpha_1$ and $\alpha_i-\alpha_{i-1}$ vertices, star$_{\alpha_1}(\alpha_1)$ and star$_{\alpha_i}(\alpha_i-\alpha_{i-1})$ respectively, for any $i=2,\ldots,m$\,, in which the centers are missing. With the notations of Proposition \ref{Km+1star}, let $\mathcal{P}=(X_{\alpha_1}\,, \ldots, X_{\alpha_m})$, $\mathcal{P}_{\overline{\alpha_1}}=(X_1, \ldots, X_{\alpha_1-1}\,, X_{\alpha_2}, \ldots, X_{\alpha_m})$, $\mathcal{P}_{\overline{\alpha_i}}=(X_{\alpha_1}\,, \ldots,$ $X_{\alpha_{i-1}}, X_{\alpha_{i-1}+1}, \ldots, X_{\alpha_i-1},$ $ X_{\alpha_{i+1}}\,,\ldots,$ $X_{\alpha_m})$, for any $i=2, \ldots, m$\,, be the associated minimal prime ideals. Then
$I(\mathcal{H})= \mathcal{P} \cap \bigg(\,\displaystyle{\bigcap_{i=1}^m \mathcal{P}_{\overline{\alpha_i}}}\bigg)$\,, and ht $I(\mathcal{H})=$ $m$\,. Thus $I_c(\mathcal{H})=(X_{\alpha_1}\cdots X_{\alpha_m}, X_1\cdots X_{\alpha_1-1} X_{\alpha_2}\cdots X_{\alpha_m}, \ldots,$ $X_{\alpha_1}\cdots X_{\alpha_{m-1}}X_{\alpha_{m-1}+1}\cdots X_{\alpha_m-1})$, and the thesis follows.
\end{proof}
\begin{example}\rm{
Let $\mathcal{H}$ be the connected graph with
$V(\mathcal{H})=\{v_1,\ldots,v_4,\ldots,v_{11}\}$ given by $K_4 $
$\cup$\, star$_1(2)$ $ \cup$\, star$_2(4)$ $ \cup$\, star$_3(2)$ $
\cup$\, star$_4(3)$\,.
The ideal of vertex covers is\\[1mm]
$I_c(\mathcal{H})=(X_1X_2X_3X_4, X_2X_3X_4X_5, X_1X_3X_4X_6X_7X_8, X_1X_2X_4X_9, X_1X_2X_3X_{10}X_{11})$.
}
\end{example}

Let's now analyze the structure of the ideals of vertex covers for the class of non-squarefree edge ideals associated to the connected graphs on $n$ vertices\, $\mathcal{K}'=\mathcal{H} \cup \{v_i,v_i\}$, for some $i=1,\ldots,n$\,.
Note that the class $\mathcal{K}'$ is larger than $\mathcal{K}$ because $\mathcal{K}'$ may have loops on vertices that don't belong to $K_m$\,.\\[.5mm]

First let's enunciate two lemmas that give the generators of the ideals of vertex covers for the edge ideals of a complete graph with loops and a star graph with loops, respectively.\\ Their proofs are a direct consequence of Property \ref{vertcov}\,.
\begin{lemma} \label{Lema3}
Let $K'_m$ be the complete graph with loops having vertices \, $v_1,\ldots, v_m$\,. The ideal of vertex covers of $\,I(K'_m)$ is generated at most by $m-1$ monomials. In particular,\\
(a) if there are loops in all the vertices,\, $I_c(K'_m)=(X_1X_2\cdots X_m),$\\
(b) if there are loops in \,$r<m$ vertices, $v_{t_1},\ldots,
v_{t_r}$\,, \,$\{t_1,\ldots,t_r\} \subseteq \{1, \ldots, m\},$\par
$I_c(K'_m)$ has \,$m\!-\!r$ generators and it is \par
$\big(\{X_{\sigma_{1}}\!\cdot\cdot\cdot\!X_{\sigma_{m-1}}\ \!|\! \
\sigma_j\!=\!t_j,\, \forall \,j\!=\!1,\ldots,r; \sigma_i\in
\{1,\ldots,m\}\!\setminus \{t_1,\ldots,t_r\}, \forall
\,i\!\neq\! j\}\big).$
\end{lemma}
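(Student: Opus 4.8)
The plan is to apply Property \ref{vertcov} directly to the graph $K'_m$, whose edge ideal is $I(K'_m)=\big(\{X_iX_j\mid 1\le i<j\le m\}\big)+\big(X_{t_1}^2,\ldots,X_{t_r}^2\big)$. By that property, $I_c(K'_m)$ factors as the intersection of the cover ideal coming from the edges of $K_m$ with the ideal forced by the loops. The first factor is exactly $I_c(K_m)$, which by Lemma \ref{Lema1} is generated by the $m$ squarefree monomials $u_s=\prod_{i\neq s}X_i$, $s=1,\ldots,m$ (the products of all but one variable). The loop factor is the intersection $\bigcap_{k=1}^{r}(X_{t_k})$ of principal ideals, which, since the $X_{t_k}$ are distinct variables, equals the single principal ideal $(X_{t_1}\cdots X_{t_r})$. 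Thus $I_c(K'_m)=(u_1,\ldots,u_m)\cap(X_{t_1}\cdots X_{t_r})$.

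First I would compute this intersection of monomial ideals by taking, for each generator $u_s$, the least common multiple $\operatorname{lcm}(u_s,\,X_{t_1}\cdots X_{t_r})$. The key dichotomy is whether the index $s$ is a loop vertex. If $s$ is \emph{not} a loop vertex, then every loop variable $X_{t_k}$ already divides $u_s$, so $X_{t_1}\cdots X_{t_r}$ divides $u_s$ and the lcm is simply $u_s$. If instead $s=t_k$ is a loop vertex, then $u_s$ omits precisely the variable $X_{t_k}$, which the loop product supplies, so the lcm becomes the full product $X_1\cdots X_m$.

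The step I expect to be the crux is the redundancy analysis that separates cases (a) and (b). In case (b), with $r<m$, there is at least one non-loop vertex $s$, and the corresponding generator $u_s$ — a product of $m-1$ variables — divides $X_1\cdots X_m$; hence every full-product generator arising from a loop vertex is redundant and is discarded from the minimal generating set. What survives is exactly $\{u_s\mid s\text{ a non-loop vertex}\}$, giving the stated $m-r$ generators, each containing all loop variables $X_{t_1},\ldots,X_{t_r}$ together with all but one of the non-loop variables, which is precisely the description $X_{\sigma_1}\cdots X_{\sigma_{m-1}}$ in the lemma. In case (a), with $r=m$, every vertex is a loop vertex, so every lcm equals $X_1\cdots X_m$ and the ideal collapses to the single generator $(X_1\cdots X_m)$.

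To confirm the outcome, and to make transparent why the loop variables are forced into every generator, I would cross-read the conclusion against the minimal vertex covers: a subset $C\subseteq\{v_1,\ldots,v_m\}$ covers $I(K'_m)$ iff it meets every edge of $K_m$ — so it omits at most one vertex — and contains every loop vertex, since each generator $X_{t_k}^2$ is divisible only by $X_{t_k}$. Minimality then forces $C$ to omit exactly one non-loop vertex when $r<m$, and to equal the whole vertex set when $r=m$, reproducing the $m-r$ (respectively, one) generators. This also confirms the bound of at most $m-1$ generators asserted in the statement, since $m-r\le m-1$ whenever $r\ge 1$.
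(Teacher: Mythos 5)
Your proof is correct and takes essentially the same route as the paper: the paper's entire proof is the remark that the lemma is ``a direct consequence of Property \ref{vertcov}'', and that is exactly what you execute, with the intersection computed explicitly via lcm's, the redundancy of the full products $X_1\cdots X_m$ handled, and a confirming cross-check through minimal vertex covers. No gaps.
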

\begin{lemma} \label{Lema4}
Let star$'_n(n)$ be the star graph with loops having vertices \, $v_1,\ldots, v_n$\,. The ideal of vertex covers of $\,I($star$'_n(n))$ has at most 2 \mbox{generators}. In particular,\\
(a) if the loops are in the vertices $v_1,\ldots, v_{n-1}$\,, $I_c\,($star$'_n(n))=(X_1\cdots X_{n-1}),$\\
(b) if the loops are in \,$v_3, v_{n-2}$\,,\, $I_c\,($star$'_n(n))=(X_1\cdots X_{n-1}, \,X_3X_{n-2}X_n),$\\
(c) if there are loops in the center and in the vertices\,
$v_{t_1},\ldots, v_{t_s}$\,, \,$\{t_1,\ldots,t_s\}$\par $\subseteq
\{1, \ldots, n\!-\!1\}$\,, \, $I_c\,($star$'_n(n))=(X_{t_1}\cdots
X_{t_s}X_n)\,.$
\end{lemma}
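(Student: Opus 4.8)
The plan is to apply Property \ref{vertcov} directly, after first isolating the single feature that loops contribute. The decisive observation is that if $\{v_k,v_k\}$ is a loop of $\mathrm{star}'_n(n)$, then the corresponding generator $X_k^2$ of the edge ideal is divisible only by the variable $X_k$; hence every vertex cover of $I(\mathrm{star}'_n(n))$ must contain $X_k$, and equivalently every minimal prime of the edge ideal contains $X_k$. In the language of Property \ref{vertcov} this means the looped variables $X_k$ are appended as common factors, so the intersection reduces to $\big(\bigcap_{i=1}^{n-1}(X_i,X_n)\big)\cap\big(X_k\mid\{v_k,v_k\}\in L\big)$. First I would record this forcing principle, as it is what turns the computation of $I_c$ into a short enumeration of which loopless covers survive.

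Next I would recall from Lemma \ref{Lema2}, applied with center $v_n$, that the loopless star $\mathrm{star}_n(n)$ has exactly the two minimal vertex covers $\{v_n\}$ and $\{v_1,\ldots,v_{n-1}\}$, so that $I_c(\mathrm{star}_n(n))=(X_1\cdots X_{n-1},X_n)$. Adding loops then acts purely as a filter: a cover survives precisely when it already contains every looped vertex, and any surviving cover that omits a looped vertex is enlarged by appending the missing looped variables to its monomial. Since no new incomparable covers can be created this way, the number of generators of $I_c(\mathrm{star}'_n(n))$ is bounded by two, which already yields the first assertion of the lemma. The three cases are then immediate. In case (a) the loops at $v_1,\ldots,v_{n-1}$ force all leaves into every cover, destroying $\{v_n\}$ and leaving $\{v_1,\ldots,v_{n-1}\}$ as the unique minimal cover, so $I_c=(X_1\cdots X_{n-1})$. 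In case (c) the loop at the center $v_n$ forces $v_n$ into every cover, and since $v_n$ alone covers all edges the unique minimal cover is $\{v_n,v_{t_1},\ldots,v_{t_s}\}$, giving $I_c=(X_{t_1}\cdots X_{t_s}X_n)$. In case (b) both loopless covers survive after enlargement: $\{v_1,\ldots,v_{n-1}\}$ already contains $v_3$ and $v_{n-2}$, while $\{v_n\}$ is enlarged to $\{v_3,v_{n-2},v_n\}$, whence $I_c=(X_1\cdots X_{n-1},\,X_3X_{n-2}X_n)$.

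I do not expect a genuine obstacle, since once the loop-forcing observation is in place each case is a short verification. The only point requiring a little care is the incomparability check in case (b), which guarantees that both generators are actually needed and that the bound of two is attained: the cover $\{v_3,v_{n-2},v_n\}$ contains $v_n$ but no other leaf, whereas $\{v_1,\ldots,v_{n-1}\}$ contains a leaf such as $v_1$ but not $v_n$, so neither cover refines the other. With this the three stated forms of $I_c(\mathrm{star}'_n(n))$ follow.
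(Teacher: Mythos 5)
Your proof is correct and takes essentially the same route as the paper: the paper disposes of this lemma (together with Lemma \ref{Lema3}) with the single remark that it is ``a direct consequence of Property \ref{vertcov}'', and your argument is precisely that computation carried out in detail, via the correspondence between minimal vertex covers and minimal primes. Your loop-forcing observation (every cover must contain each looped vertex, so minimal covers of the looped star are the minimal elements among the loopless covers of Lemma \ref{Lema2} enlarged by the looped vertices) is exactly the content the paper leaves implicit, and your three case verifications match the stated generators.
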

\vspace{1mm}
The structure of the ideal of vertex covers of $I(\mathcal{K}')$ is well described by the following
\begin{theorem} \label{Km+star+loopsout}
Let $\mathcal{K}'$ be the connected graph with $n$ vertices $v_1,\ldots, v_n$ formed by the union of\,: (i) the complete graph $K_m\,, m<n$, with vertices $v_{\alpha_1}, \ldots, v_{\alpha_m},$ $1 \leqslant \alpha_1 < \alpha_2 < \ldots < \alpha_m=n$\,; (ii) star graphs \mbox{star$_{\alpha_i}(\alpha_i-\alpha_{i-1})$} on vertices $v_{\alpha_{i-1}+1},\ldots,v_{\alpha_i}\,, \forall \,i=1,\ldots,m$\,, and the index $\alpha_0$ indicates $0$\,; (iii) loops in some vertices, say $v_{\alpha_2}, v_{\alpha_4}, v_{\alpha_5}, v_{\alpha_{m-3}}, v_{\alpha_{m-1}},$ $v_{\alpha_{i-1}+j_1}, \ldots, v_{\alpha_{i-1}+j_{p_i}}$\,, where 
$\{j_1,$ $\dots,j_{p_i} \} \subseteq \{1, \ldots, \alpha_i\!-\!\alpha_{i-1}\!-\!1\}$\,.
The ideal of vertex covers of $I(\mathcal{K}')$ has at most $m+1$ monomial generators and it is\\[1mm]
$I_c(\mathcal{K}')=(X_{j_1}\!\cdot\!\cdot\!\cdot
X_{j_{p_1}}X_{\alpha_1}X_{\alpha_1+j_1} \!\cdot\!\cdot\!\cdot
X_{\alpha_1+j_{p_2}}X_{\alpha_2} X_{\alpha_2+j_1}
\!\cdot\!\cdot\!\cdot X_{\alpha_{m-1}+j_{p_m}}X_{\alpha_m},$\\ $
X_{1}\!\cdot\!\cdot\!\cdot X_{\alpha_{1}-1}X_{\alpha_1+j_1}
\!\cdot\!\cdot\!\cdot
X_{\alpha_1+j_{p_2}}X_{\alpha_2}X_{\alpha_2+j_1}
\!\cdot\!\cdot\!\cdot X_{\alpha_{m-1}+j_{p_m}}X_{\alpha_m},$\\ $
X_{j_1}\!\cdot\!\cdot\!\cdot X_{j_{p_1}}X_{\alpha_1}X_{\alpha_1+1}
\!\cdot\!\cdot\!\cdot X_{\alpha_2-1}X_{\alpha_2}X_{\alpha_2+j_1}
\!\cdot\!\cdot\!\cdot
X_{\alpha_{m-1}+j_{p_m}}X_{\alpha_m},\ldots,$\\ $
X_{j_1}\!\cdot\!\cdot\!\cdot
X_{\alpha_{m-3}+j_{p_{m-2}}}X_{\alpha_{m-2}}
X_{\alpha_{m-2}+1}\!\cdot\!\cdot\!\cdot
X_{\alpha_{m-1}-1}X_{\alpha_{m-1}}
X_{\alpha_{m-1}+j_1}\!\cdot\!\cdot\cdot$\\ $
X_{\alpha_{m-1}+j_{p_m}}X_{\alpha_m},$ $
X_{j_1}\!\cdot\!\cdot\!\cdot
X_{\alpha_{m-2}+j_{p_{m-1}}}X_{\alpha_{m-1}}
X_{\alpha_{m-1}+1}\!\cdot\!\cdot\!\cdot X_{\alpha_m-1})\,.$
\end{theorem}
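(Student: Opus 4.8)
The plan is to follow the same route as in the proof of Theorem \ref{Km+allstar}, exploiting the one-to-one correspondence between the minimal vertex covers of $\mathcal{K}'$ and the minimal prime ideals of $I(\mathcal{K}')$, so that the generators of $I_c(\mathcal{K}')$ are exactly the squarefree monomials $\prod_{v_s\in C}X_s$ as $C$ runs over the minimal vertex covers. Since the loops create no new edges, the edge part of $\mathcal{K}'$ coincides with that of $\mathcal{H}$, and Property \ref{vertcov} gives
$$I_c(\mathcal{K}')=I_c(\mathcal{H})\cap\Big(\bigcap_{\{v_k,v_k\}\in L(\mathcal{K}')}(X_k)\Big),$$
with $I_c(\mathcal{H})$ already furnished by Theorem \ref{Km+allstar}. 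First I would describe the minimal primes of $I(\mathcal{K}')$ by assembling the local information provided by Lemma \ref{Lema3} for the complete subgraph $K'_m$ carrying the loops on the centres, and by Lemma \ref{Lema4} for each \mbox{star$_{\alpha_i}(\alpha_i-\alpha_{i-1})$} carrying the loops on its leaves.

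The decisive observation is that a set $C$ covers $\mathcal{K}'$ precisely when it covers every edge of $\mathcal{H}$ and contains every looped vertex. For the complete part, Lemma \ref{Lema3} forces all looped centres into $C$ and permits at most one non-looped centre to be omitted; for each star, Lemma \ref{Lema4} forces the looped leaves into $C$ and allows one either to keep the centre or to take all the leaves. Assembling these choices, the minimal covers are: the cover $\mathcal{P}$ consisting of all $m$ centres, to which the forced looped leaves are adjoined, and, for every \emph{non-looped} centre $v_{\alpha_i}$, the cover $\mathcal{P}_{\overline{\alpha_i}}$ obtained by deleting $v_{\alpha_i}$, inserting all the leaves of star$_{\alpha_i}$, and adjoining the looped leaves of the remaining stars. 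A centre $v_{\alpha_i}$ bearing a loop cannot be deleted, so the candidate $\mathcal{P}_{\overline{\alpha_i}}$ is unavailable: reinstating $X_{\alpha_i}$ produces the monomial $\big(\prod_{j}X_{\alpha_j}\big)\cdot(\text{all leaves of star }i)\cdot(\text{looped leaves elsewhere})$, which is a multiple of the one attached to $\mathcal{P}$ and hence superfluous. This is exactly why the number of genuinely needed generators is \emph{at most} $m+1$.

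To reach the stated list I would then write out, for each surviving minimal prime, the product of its variables: $\mathcal{P}$ yields $\big(\prod_{j}X_{\alpha_j}\big)\cdot\prod_{\text{looped leaves}}X_{\alpha_{i-1}+j_t}$, the first displayed generator, while each $\mathcal{P}_{\overline{\alpha_i}}$ yields the generator in which the block $X_{\alpha_i}$ is replaced by the full leaf block $X_{\alpha_{i-1}+1}\cdots X_{\alpha_i-1}$ and the looped leaves of the other stars are kept, in agreement with Lemmas \ref{Lema3}--\ref{Lema4} and with the squarefree generators of Theorem \ref{Km+allstar}; on the sample pattern $v_{\alpha_2},v_{\alpha_4},v_{\alpha_5},v_{\alpha_{m-3}},v_{\alpha_{m-1}}$ these reproduce the monomials in the statement. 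The main obstacle is purely the bookkeeping: one must track, uniformly in $i$, how a loop on a centre as opposed to a loop on a leaf alters the corresponding minimal prime, and verify in each case that no further variable can be removed, so that the computed monomials are indeed the minimal generators once the looped-centre monomials have been discarded.
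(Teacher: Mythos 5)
Your proposal is correct and follows essentially the same route as the paper's proof: the vertex-cover/minimal-prime correspondence, the observation that the minimal primes of $I(\mathcal{K}')$ are those of $I(\mathcal{H})$ from Theorem \ref{Km+allstar} augmented by the missing loop variables, and the resulting multiplication of the generators of $I_c(\mathcal{H})$ by the looped-vertex variables absent from each prime. If anything, your explanation of the ``at most $m+1$'' count (a looped centre makes the corresponding augmented prime contain the augmented $\mathcal{P}$, so its monomial is a redundant multiple) is spelled out more carefully than in the paper, which simply asserts that the number of minimal primes is unchanged.
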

\begin{proof}
The correspondence between minimal vertex covers of the
graph $\mathcal{H}$ and minimal prime ideals of the edge ideal of
$\mathcal{H}$ extends to the graph $\mathcal{K}'$. So the number of
minimal prime ideals $\wp$ of $I(\mathcal{K}')$ is the same than
that of minimal primes of $I(\mathcal{H})$. Moreover, by Property
\ref{vertcov}, the generators of the prime ideals $\wp$ are those
related to $\mathcal{H}$ multiplied by monomials $X_i$
 whenever $X_i \notin \wp\,,\, i=1,\ldots,n$\,. The latter monomials represent the vertices with loops of $\mathcal{K}'$ for which the
corresponding variables are missing in $\wp$\,. The assertion holds through some
computation taking in consideration Theorem \ref{Km+allstar}\,.
\end{proof}
Significative particular cases concern the graphs of the class such that:
(a) some star graphs are missing,
(b) the loops lie only on the vertices of $K_m$, and
(c) the loops lie only on the vertices not belonging to $K_m$.
\begin{corollary} Let $\mathcal{K}'$ be as in Theorem \ref{Km+star+loopsout}, but there are in it less than $m$ star graphs star$_{\alpha_i}(\alpha_i-\alpha_{i-1})$,
suppose for \,$i=2,3,6,$ $m\!-\!3,$ $m\!-\!2,m$\,. The ideal of vertex covers of $I(\mathcal{K}')$
has at most $m$ monomial generators and it is\\
$I_c(\mathcal{K}')=(X_{\alpha_1+j_1} \!\cdot\!\cdot\!\cdot
X_{\alpha_1+j_{p_2}}X_{\alpha_2}X_{\alpha_2+j_1}
\!\cdot\!\cdot\!\cdot X_{\alpha_2+j_{p_3}} X_{\alpha_3}X_{\alpha_4}
\!\cdot\!\cdot\!\cdot X_{\alpha_{m-1}+j_{p_m}}X_{\alpha_m},$\\ $
X_{\alpha_1}X_{\alpha_1+1} \!\cdot\!\cdot\!\cdot
X_{\alpha_2-1}X_{\alpha_2}X_{\alpha_2+j_1} \!\cdot\!\cdot\!\cdot
X_{\alpha_2+j_{p_3}} X_{\alpha_3}X_{\alpha_4} \!\cdot\!\cdot\!\cdot
X_{\alpha_{m-1}+j_{p_m}}X_{\alpha_m},\ldots,$\\ $
X_{\alpha_1}\!\cdot\!\cdot\!\cdot
X_{\alpha_{m-3}+j_{p_{m-2}}}X_{\alpha_{m-2}} X_{\alpha_{m-1}}
X_{\alpha_{m-1}+j_1}\!\cdot\!\cdot\cdot$ $
X_{\alpha_{m-1}+j_{p_m}}X_{\alpha_m},$\\ $
X_{\alpha_1}\!\cdot\!\cdot\!\cdot
X_{\alpha_{m-3}+j_{p_{m-2}}}X_{\alpha_{m-2}}X_{\alpha_{m-1}}
X_{\alpha_{m-1}+1}\!\cdot\!\cdot\!\cdot X_{\alpha_m-1})\,.$
\end{corollary}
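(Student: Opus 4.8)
The plan is to treat this statement as a degeneration of Theorem~\ref{Km+star+loopsout}, tracking how the minimal primes of $I(\mathcal{K}')$ are affected when some of the star graphs are deleted. As in the proof of that theorem, I would first invoke the one-to-one correspondence between minimal vertex covers of $\mathcal{K}'$ and minimal prime ideals of $I(\mathcal{K}')$, together with Property~\ref{vertcov}, so as to reduce the determination of $I_c(\mathcal{K}')$ to listing the minimal covers of the underlying graph and then folding in the loops.

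The starting point is the description of the minimal covers when all $m$ stars are present (Theorem~\ref{Km+allstar}): the full-center prime $\mathcal{P}=(X_{\alpha_1},\ldots,X_{\alpha_m})$, together with the primes $\mathcal{P}_{\overline{\alpha_i}}$ obtained by omitting a single center $X_{\alpha_i}$ and adjoining the leaves of star$_{\alpha_i}$. The conceptual heart of the argument is what happens at a center $v_{\alpha_i}$ whose star has been removed: then $v_{\alpha_i}$ carries no pendant edges, so omitting it forces no leaf variables, and the cover $\mathcal{P}_{\overline{\alpha_i}}$ collapses to $\mathcal{P}_{\alpha_i}=(X_{\alpha_1},\ldots,\widehat{X_{\alpha_i}},\ldots,X_{\alpha_m})$ of height $m-1$. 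Since $\mathcal{P}_{\alpha_i}\subsetneq\mathcal{P}$, as soon as even one star is missing the full-center prime $\mathcal{P}$ ceases to be minimal and drops out of the list. Conversely, because $\mathcal{P}_{\alpha_i}$ still contains every center except $\alpha_i$, it is incomparable with each surviving $\mathcal{P}_{\overline{\alpha_k}}$ (which omits a center $\alpha_k\ne\alpha_i$ that $\mathcal{P}_{\alpha_i}$ retains), so all the remaining covers stay minimal.

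Counting then yields the bound. With stars present exactly for $i\in\{2,3,6,m-3,m-2,m\}$, one obtains a prime $\mathcal{P}_{\alpha_i}$ for each of the $m-6$ deleted stars and a prime $\mathcal{P}_{\overline{\alpha_k}}$ for each of the $6$ surviving stars, for a total of $m$ minimal primes in place of the $m+1$ of Theorem~\ref{Km+star+loopsout}, hence at most $m$ generators of $I_c(\mathcal{K}')$. To recover the displayed monomials I would finish exactly as in Theorem~\ref{Km+star+loopsout}: every minimal cover must contain all the looped vertices, so by Property~\ref{vertcov} each generator of the loopless graph is multiplied by the variables $X_i$ corresponding to loops not already present in the associated prime, which reinstates the blocks $X_{\alpha_{i-1}+j_1}\cdots X_{\alpha_{i-1}+j_{p_i}}$ coming from loops sitting on the star leaves.

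I expect the main obstacle to be notational rather than conceptual: keeping the double indexing by the center positions $\alpha_i$ and the loop positions $j_\bullet$ consistent while writing out each generator, and checking that deleting the star at $\alpha_i$ indeed erases precisely the block $X_{j_1}\cdots X_{j_{p_i}}X_{\alpha_i}$ from the relevant monomials while the full-center generator is absorbed into the collapsed ones. The combinatorial content is short --- drop $\mathcal{P}$, and replace each $\mathcal{P}_{\overline{\alpha_i}}$ by $\mathcal{P}_{\alpha_i}$ for every missing star --- and the only real work is verifying the explicit form against Theorem~\ref{Km+allstar} term by term.
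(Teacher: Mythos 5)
Your proposal is correct and takes essentially the same route as the paper, which states this corollary without a separate proof as a direct consequence of Theorem \ref{Km+star+loopsout}: list the minimal vertex covers of the loopless graph, pass to the corresponding minimal primes, and fold in the loop variables via Property \ref{vertcov}. Your key observation --- that deleting star$_{\alpha_i}$ collapses $\mathcal{P}_{\overline{\alpha_i}}$ to $\mathcal{P}_{\alpha_i}=(X_{\alpha_1},\ldots,\widehat{X_{\alpha_i}},\ldots,X_{\alpha_m})$, which sits strictly inside the all-centers prime $\mathcal{P}$ and hence removes $\mathcal{P}$ from the list of minimal primes, dropping the count from $m+1$ to $m$ --- is precisely the content the paper leaves implicit.
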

\begin{corollary} Let $\mathcal{K}$ be the subgraph of $\mathcal{K}'$ having loops only in the vertices of $K_m$\,.
 The ideal of vertex covers of $I(\mathcal{K})$ has at most $m+1$ monomial generators.
 For instance, if the loops lie on $v_{\alpha_2}, v_{\alpha_4}, v_{\alpha_5}, v_{\alpha_{m-3}}, v_{\alpha_{m-1}}$\,, it is:\\
$I_c(\mathcal{K})=(X_{\alpha_1}\!\cdot\!\cdot\!\cdot X_{\alpha_m},
X_{1}\!\cdot\!\cdot\!\cdot
X_{\alpha_1-1}X_{\alpha_2}\!\cdot\!\cdot\!\cdot X_{\alpha_m},
X_{\alpha_1}X_{\alpha_1+1}\!\cdot\!\cdot\!\cdot
X_{\alpha_2-1}X_{\alpha_2}X_{\alpha_3}\!\cdot\!\cdot\!\cdot
X_{\alpha_m},$ $\ldots,$ $ X_{\alpha_1}\!\cdot\!\cdot\!\cdot
X_{\alpha_{m-2}} X_{\alpha_{m-2}+1}\!\cdot\!\cdot\!\cdot
X_{\alpha_{m-1}-1}X_{\alpha_{m-1}} X_{\alpha_m},$
$X_{\alpha_1}\!\cdot\!\cdot\!\cdot
X_{\alpha_{m-1}}X_{\alpha_{m-1}+1} \!\cdot\!\cdot\!\cdot
X_{\alpha_m-1})\,.$
\end{corollary}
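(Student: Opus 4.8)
The plan is to obtain this Corollary as the specialization of Theorem \ref{Km+star+loopsout} in which every looped vertex lies in $K_m$; equivalently, none of the star-leaf vertices $v_{\alpha_{i-1}+j}$ carries a loop, so that in the notation of that theorem all the leaf-loop index sets $\{j_1,\dots,j_{p_i}\}$ are empty and only the central loops survive. First I would recall from Theorem \ref{Km+allstar} the $m+1$ minimal prime ideals of $I(\mathcal{H})$: the prime $\mathcal{P}=(X_{\alpha_1},\dots,X_{\alpha_m})$ arising from the all-centers cover, together with the primes $\mathcal{P}_{\overline{\alpha_i}}$ for $i=1,\dots,m$, in which the center variable $X_{\alpha_i}$ is replaced by the variables attached to the leaves of the star centered at $v_{\alpha_i}$.

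Next, following the mechanism already established in the proof of Theorem \ref{Km+star+loopsout}, I would use that passing from $\mathcal{H}$ to $\mathcal{K}$ leaves the set of minimal covers unchanged but, by Property \ref{vertcov}, multiplies the generator attached to a minimal prime $\wp$ by $X_{\alpha_k}$ exactly when $v_{\alpha_k}$ carries a loop and $X_{\alpha_k}\notin\wp$. The observation to isolate is that each center variable $X_{\alpha_k}$ is missing from precisely one of the $m+1$ primes, namely $\mathcal{P}_{\overline{\alpha_k}}$, since $X_{\alpha_k}$ belongs to $\mathcal{P}$ and to every $\mathcal{P}_{\overline{\alpha_i}}$ with $i\neq k$. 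Hence a loop on $v_{\alpha_k}$ alters only the single generator coming from $\mathcal{P}_{\overline{\alpha_k}}$, inserting the factor $X_{\alpha_k}$ between its leaf block $X_{\alpha_{k-1}+1}\cdots X_{\alpha_k-1}$ and the trailing centers, while every other generator is untouched.

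Applying this prescription to the loops on $v_{\alpha_2},v_{\alpha_4},v_{\alpha_5},v_{\alpha_{m-3}},v_{\alpha_{m-1}}$ therefore modifies only the five corresponding generators and leaves the remaining ones as in $I_c(\mathcal{H})$; reassembling the list then reproduces exactly the displayed formula, and the bound ``at most $m+1$ generators'' is immediate because the number of minimal primes is unchanged and equals $m+1$. Since Theorem \ref{Km+star+loopsout} already carries out the general computation with loops placed anywhere, I expect no genuine obstacle here: the whole argument is a direct reading-off, and the only point needing care is the correct placement of each inserted central factor $X_{\alpha_k}$ within its monomial, which is settled by comparing the support of $\mathcal{P}_{\overline{\alpha_k}}$ with the supports of the other primes. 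If one wishes to confirm that the listed $m+1$ monomials form a \emph{minimal} generating set rather than merely a generating set, it suffices to check pairwise non-divisibility of their supports, which the presence of the distinguished leaf blocks guarantees.
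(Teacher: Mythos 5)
Your core derivation is correct and is, in substance, the paper's own route: the paper prints no separate proof for this corollary, treating it exactly as you do, namely as the specialization of Theorem \ref{Km+star+loopsout} in which all leaf-loop sets $\{j_1,\dots,j_{p_i}\}$ are empty, with the mechanism of that theorem's proof (Property \ref{vertcov}: a loop at $v_{\alpha_k}$ multiplies by $X_{\alpha_k}$ precisely those cover monomials whose prime omits $X_{\alpha_k}$) doing all the work. Your observation that $X_{\alpha_k}$ is absent from exactly one of the $m+1$ primes of Theorem \ref{Km+allstar}, namely $\mathcal{P}_{\overline{\alpha_k}}$, so that each central loop alters exactly one generator, is the correct way to read off the displayed formula.

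However, your closing remark is false and should be deleted or corrected. The listed $m+1$ monomials are \emph{not} pairwise non-divisible and do \emph{not} form a minimal generating set. Inserting $X_{\alpha_k}$ into the generator coming from $\mathcal{P}_{\overline{\alpha_k}}$ produces a monomial whose support is the set of \emph{all} centers $\{\alpha_1,\dots,\alpha_m\}$ together with the leaf block of the star at $v_{\alpha_k}$; consequently the first generator $X_{\alpha_1}\cdots X_{\alpha_m}$ divides it. Thus each of the five loop-modified generators (those for $\alpha_2,\alpha_4,\alpha_5,\alpha_{m-3},\alpha_{m-1}$) is redundant in the ideal, and $I_c(\mathcal{K})$ is in fact minimally generated by the all-centers monomial together with the unmodified generators of the loop-free centers, i.e.\ by $m-4$ monomials in this instance. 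The leaf blocks do distinguish the modified generators from \emph{one another}, but not from the all-centers generator, which carries no leaf block at all; this redundancy is precisely why the corollary (and the theorem it specializes) is phrased as ``at most $m+1$ monomial generators.'' Since the statement asserts only a generating set and an upper bound, this error does not invalidate your proof of the corollary itself, but as written the minimality claim is a step that fails.
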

\begin{corollary}
Let \,$\mathcal{H} \cup \{v_h,v_h\}$ be the connected graph on $n$
vertices $v_1,\ldots,$ $v_m,\ldots, v_n\,, \,\forall \,h=m+1,\ldots,n$\,.
The ideal of vertex covers for the edge ideal of $\mathcal{H} \cup \{v_h,v_h\}$ is generated by the $m$ monomials\,
$X_2X_3\cdots X_mX_{m+1}\cdots X_n\,,$ $X_1X_3\cdots X_mX_{m+1}\cdots X_n\,,\,\ldots, X_1X_2\cdots X_{m-1}X_{m+1}\cdots X_n$\,.
\end{corollary}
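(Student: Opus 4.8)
The plan is to read off the generators of the cover ideal from the one-to-one correspondence between the minimal vertex covers of $\mathcal{H}\cup\{v_h,v_h\}$ and the minimal primes of its edge ideal, after determining those covers by hand. First I would isolate the effect of the loops. A loop $\{v_h,v_h\}$ contributes the generator $X_h^2$ to the edge ideal, and the only variable dividing it is $X_h$; hence the sole way to cover such a loop is to place $v_h$ in the cover. Since in the graph $\mathcal{H}\cup\{v_h,v_h\}$ the loops sit on \emph{every} vertex $v_{m+1},\ldots,v_n$ lying outside $K_m$, it follows that every vertex cover must contain the whole set $\{v_{m+1},\ldots,v_n\}$; equivalently, by Property \ref{vertcov}, each generator of $I_c$ is forced to be divisible by $X_{m+1}\cdots X_n$.

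Next I would use this forcing to collapse the covering condition onto $K_m$. Each vertex $v_h$ with $h>m$ is a leaf of one of the star graphs, joined only to a center in $K_m$; once all these leaves belong to the cover, every star edge is automatically covered. Consequently a subset $C$ covers $\mathcal{H}\cup\{v_h,v_h\}$ minimally precisely when $C=C'\cup\{v_{m+1},\ldots,v_n\}$ with $C'$ a minimal vertex cover of the complete graph $K_m$. By Lemma \ref{Lema1}, the minimal covers of $K_m$ are exactly the $m$ sets obtained by deleting a single vertex, $\{v_1,\ldots,v_m\}\setminus\{v_k\}$ for $k=1,\ldots,m$.

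Combining the two observations, I would conclude that the minimal vertex covers of $\mathcal{H}\cup\{v_h,v_h\}$ are the $m$ sets $C_k=\big(\{v_1,\ldots,v_m\}\setminus\{v_k\}\big)\cup\{v_{m+1},\ldots,v_n\}$, $k=1,\ldots,m$. Each is a cover by the previous step, and each is minimal because no leaf may be removed (its loop would become uncovered) and no vertex of $K_m\setminus\{v_k\}$ may be removed (an edge of $K_m$ would become uncovered). Passing to the associated minimal primes $(C_k)$ and using the definition of the ideal of vertex covers, $I_c(\mathcal{H}\cup\{v_h,v_h\})$ is generated by the $m$ monomials $\prod_{v_j\in C_k}X_j=X_1\cdots X_{k-1}X_{k+1}\cdots X_m X_{m+1}\cdots X_n$, which is precisely the asserted list. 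The same conclusion may be obtained by specialising Theorem \ref{Km+star+loopsout} to the configuration in which the loops lie on all of $v_{m+1},\ldots,v_n$ and on none of the vertices of $K_m$.

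Since the computation is short, there is no genuine obstacle; the only step deserving care is the completeness-and-minimality claim, namely that the added loops neither spoil the minimality of the $C_k$ nor create further minimal covers. This is exactly what the loop-forcing remark of the first paragraph secures, and once it is in hand the argument closes immediately.
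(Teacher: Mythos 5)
Your proof is correct. The paper itself states this corollary without an explicit proof, leaving it as a specialisation of Theorem \ref{Km+star+loopsout}, whose method is: take the minimal primes of $I(\mathcal{H})$ from Theorem \ref{Km+allstar} and multiply the generator of each corresponding monomial by the variables $X_i$ of looped vertices not already appearing in that prime. Your route is genuinely different and, for this case, cleaner: the loop-forcing observation (each loop $\{v_h,v_h\}$ can only be covered by $v_h$, so every cover contains $v_{m+1},\ldots,v_n$) collapses the covering problem onto $K_m$ alone, and Lemma \ref{Lema1} then hands you the $m$ minimal covers directly. What your approach buys is twofold. First, it explains transparently why there are $m$ generators rather than the $m+1$ of Theorem \ref{Km+allstar}: the cover consisting of all centers of $K_m$, which is minimal for $\mathcal{H}$, loses minimality once the leaves are forced in; in the theorem-specialisation route this appears only as the discarding of the redundant product $X_1\cdots X_n$ coming from the prime $(X_{\alpha_1},\ldots,X_{\alpha_m})$, a point the paper glosses over with its ``at most $m+1$'' phrasing (indeed, the claim inside the proof of Theorem \ref{Km+star+loopsout} that the number of minimal primes of the looped graph equals that of $\mathcal{H}$ is contradicted by this very corollary). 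Second, your argument makes evident that the answer does not depend on how the leaves are distributed among the star centers, which is not visible from the theorem's statement. Both arguments rest on the same correspondence between minimal vertex covers and minimal primes (\cite{V1}); yours is simply the more elementary and self-contained derivation.
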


\begin{example}\rm{
Let $\mathcal{K}'$ be the connected graph with
$V(\mathcal{K}')=\{v_1,v_2,v_3,\ldots,v_{11}\}$ given by $K_3 $
$\cup$\, star$_1(4)$ $ \cup$\, star$_2(4)$ $ \cup$\, star$_3(3)$ $
\cup$ $ \{v_3,v_3\}$ $ \cup$ $ \{v_5,v_5\}$ $ \cup$ $ \{v_7,v_7\}$ $
\cup $ $\{v_9,v_9\}$\,. The ideal of vertex covers is\\[1mm]
$I_c(\mathcal{K}')= (X_1X_2X_3X_5X_7X_9, \,
X_2X_3X_4X_5X_6X_7X_9, \, X_1X_3X_5X_7X_8X_9).$}
\end{example}


\section{Ideals of vertex covers of linear type}

\hspace{6mm} Let $R$ be a noetherian ring  and let $I=(f_1,\ldots,f_s)$ be an ideal of $R$.\\
The Rees algebra $\Re(I)$ of $I$ is defined to be the $R$-graded
algebra $\bigoplus_{i\geqslant 0}I^i$. It can be identified with the
$R$-subalgebra of $R[t]$ generated by $It$, where $t$ is an
indeterminate on $R$. Let we consider the epimorphism of graded
$R$-algebras $\varphi:R[T_1,\ldots,T_s] \rightarrow
\Re(I)=R[f_1t,\ldots,f_st]$ defined by $\varphi(T_i)=f_it$, $i=1,\ldots,s$. \\
The ideal $N=\ker\varphi$ of $R[T_1,\ldots,T_s]$ is  $R$-homogeneous
and we denote $N_i$ the $R$-homogeneous component of degree $i$ of
$N$. The elements of $N_1$ are called linear relations. If
$A=(a_{ij})$, $i=1,\ldots,r$, $j=1,\ldots,s$ is the relation matrix
of $I$, then $g_i=\sum_{j=1}^s a_{ij}T_j$, $i=1,\ldots,r$, belong to
$N$ and $R[T_1,\ldots,T_s]/J$, with $J=(g_1,\ldots,g_r)$, is
isomorphic to the symmetric algebra $Sym_R(I)$ of $I$. The
generators $g_i$ of $J$ are all linear in the
variables $T_j$. \\
The natural map $\psi: Sym_R(I)\rightarrow \Re(I)$ is a surjective
homomorphism of $R$-algebras. $I$ is called of \textit{linear type}
if $\psi$
is an isomorphism, that is $N=J$.\\
Now, let $K$ be a field, $R=K[X_1,\ldots,X_n]$ be the polynomial
ring, $I \subset R$ be  a graded ideal whose generators
$f_1,\ldots,f_s$ are all of the same degree.
Let $S=R[T_1,\ldots,T_s]$ be the polynomial ring over $R$ in the
variables $T_1,\ldots,T_s$. Then we define a bigrading of $S$ by
setting $\deg(X_i)=(1,0)$ for $i=1,\ldots,n$ and $\deg(T_j)=(0,1)$ for
$j=1,\ldots,s$.
 Consider the presentation $
\varphi:R[T_1,\ldots,T_s] \rightarrow \Re(I)$, $ \varphi(T_i)=f_it,
\quad i=1,\ldots,s.$ If  $I=(f_1,\ldots,f_s) \subset R$ is a
monomial ideal, for all $1 \leqslant i<j \leqslant s$ we set $
f_{ij}=\frac{f_i}{GCD(f_i,f_j)}$ and $ g_{ij}=f_{ij}T_j -f_{ji}T_i,$
then $J$ is generated by
$\{g_{ij}\}_{1 \leqslant i<j \leqslant s}$.\\
Our aim is to investigate classes of monomial ideals arising from
graphs for which the linear relations $g_{ij}$ form a system of
generators for $N$, i.e. $N=J$. \\[1mm] Consider the ideals of vertex
covers $I_c(\mathcal{H})$ and $I_c(\mathcal{K}')$\,.

\begin{theorem}\label{3.1}
Let $R=K[X_1, \ldots,X_m, \ldots,  X_n]$. $I_c(\mathcal{H})$ is of linear type.
\end{theorem}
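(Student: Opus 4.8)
The plan is to establish that $N=J$ for the presentation of the Rees algebra $\Re(I_c(\mathcal{H}))$, working directly with the explicit generators furnished by Theorem \ref{Km+allstar} (the configurations of Proposition \ref{Km+1star} being entirely analogous). Write $c_k=X_{\alpha_k}$ for the center variables of $K_m$ and $L_k$ for the squarefree monomial in the leaf variables of the star centered at $v_{\alpha_k}$; the $m+1$ generators then read $f_0=c_1\cdots c_m$ and $f_k=L_k\,f_0/c_k$ for $k=1,\dots,m$, and the decisive feature is that the $L_k$ involve pairwise disjoint sets of variables, none of which is among the $c_j$. Since $c_kf_k=L_kf_0$, one obtains the linear relations $g_{0k}=c_kT_k-L_kT_0$, and a one-line computation gives $g_{kl}=L_k\,g_{0l}-L_l\,g_{0k}$ for $1\le k<l\le m$; hence $J=(g_{01},\dots,g_{0m})$ is generated by only $m$ of the $g_{ij}$.

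The first step is to show that $g_{01},\dots,g_{0m}$ form a regular sequence in $S=R[T_0,\dots,T_m]$. For any term order selecting $c_kT_k$ as the leading term of $g_{0k}$, these leading terms lie in pairwise disjoint sets of variables (the $c_k$ are distinct, as are the $T_k$), so they are pairwise coprime and therefore form a regular sequence; a sequence whose initial terms form a regular sequence is itself regular. Consequently $Sym_R(I_c(\mathcal{H}))=S/J$ is a complete intersection, hence Cohen--Macaulay and unmixed, of dimension $\dim S-m=n+1$.

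The heart of the argument is to prove that $J$ is prime by exhibiting $S/J$ as a domain. Inverting the product $c=c_1\cdots c_m$ makes every $c_k$ invertible, so the relations yield $T_k=(L_k/c_k)T_0$ and thus $(S/J)[c^{-1}]\cong R[c^{-1}][T_0]$, which is a domain; it therefore suffices to check that $c$ is a nonzerodivisor on $S/J$. As $S/J$ is Cohen--Macaulay and unmixed, this amounts to showing that no minimal prime of $J$ contains any $c_k$, which I would deduce from the height estimate $\height\big(J+(c_k)\big)=m+1$: reducing modulo $c_k$ turns $g_{0k}$ into $-L_kT_0$, and the $m+1$ elements $c_k,\ L_kT_0,\ \{g_{0l}\}_{l\neq k}$ once more have pairwise coprime leading terms $c_k,\ L_kT_0,\ \{c_lT_l\}_{l\neq k}$, hence form a regular sequence. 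Thus $c$ lies in no associated prime of $S/J$, so $S/J\hookrightarrow(S/J)[c^{-1}]$ embeds it into a domain and $J$ is prime.

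To conclude, $N$ is prime because $\Re(I_c(\mathcal{H}))$ is a domain, and $J\subseteq N$; since both are prime with $\dim S/J=n+1=\dim\Re(I_c(\mathcal{H}))=\dim S/N$, the inclusion is forced to be an equality $J=N$. Hence the canonical map $Sym_R(I_c(\mathcal{H}))\to\Re(I_c(\mathcal{H}))$ is an isomorphism and $I_c(\mathcal{H})$ is of linear type. I expect the primeness of $J$ to be the main obstacle: both the regular sequences and the height estimate rest on the pairwise coprimality of successive families of leading monomials, which is available precisely because the leaf supports of distinct stars are disjoint; without that disjointness the reductions and the nonzerodivisor claim would break down.
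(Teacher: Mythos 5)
Your proof is correct, but it takes a genuinely different route from the paper's. The paper argues directly on the Rees presentation ideal $N$: since $N$ is generated by binomials, the authors suppose some binomial $\underline{X}^a\underline{T}^{\alpha}-\underline{X}^b\underline{T}^{\beta}\in N$ has initial monomial outside $H=(f_{ij}T_j)$ and derive a contradiction from the combinatorial shape of $f_1,\ldots,f_{m+1}$, concluding that the linear forms $g_{ij}$ are a Gr\"obner basis of $N$ and hence generate it. You instead exploit the multiplicative structure $f_0=c_1\cdots c_m$, $f_k=L_kf_0/c_k$ to shrink $J$ to the $m$ relations $g_{0k}=c_kT_k-L_kT_0$ (your identity $g_{kl}=L_kg_{0l}-L_lg_{0k}$ is correct), show these form a regular sequence via their pairwise coprime leading terms (a lex order making the $c_k$ largest does select $c_kT_k$, so the order you need exists), prove $J$ is prime by inverting $c=c_1\cdots c_m$ --- with Cohen--Macaulayness, unmixedness and the height estimate $\height(J+(c_k))=m+1$ guaranteeing $c$ is a nonzerodivisor, so that $S/J$ embeds in the domain $R[c^{-1}][T_0]$ --- and finally force $J=N$ by comparing dimensions of the two primes $J\subseteq N$, both quotients having dimension $n+1$. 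All of these steps are sound, including the standard facts you invoke (coprime initial terms give a Gr\"obner basis and hence the expected height; in a polynomial ring two nested primes with equal-dimensional quotients coincide). Your approach buys more than the statement asks: it exhibits the symmetric algebra, hence the Rees algebra, of $I_c(\mathcal{H})$ as a complete intersection, in particular Cohen--Macaulay, and it pinpoints exactly where the hypothesis (disjoint leaf supports, disjoint from the centers) enters. The paper's Gr\"obner argument buys an explicit initial ideal of $N$ and transfers essentially verbatim to $I_c(\mathcal{K}')$ in the following theorem, whereas your complete-intersection reduction would have to be reworked for the generators of Theorem \ref{Km+star+loopsout}. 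Note that, like the paper, you work from the generator list of Theorem \ref{Km+allstar}; your remark that the configurations of Proposition \ref{Km+1star} are analogous is right (there $J$ collapses to $m-1$ relations of the same shape), and the paper leaves that case implicit as well.
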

\begin{proof} Let $f_1,\ldots,f_{m+1}$ be the minimal system of
monomial generators of $I_c(\mathcal{H})$, where
$f_1=X_{\alpha_1}X_{\alpha_2}\!\cdot\!\cdot\!\cdot X_{\alpha_m}$,
 $f_2=X_{1}\!\cdot\!\cdot\!\cdot
X_{\alpha_1-1}X_{\alpha_2}\!\cdot\!\cdot\!\cdot X_{\alpha_m},$
$\ldots, $
 $f_m=X_{\alpha_1}\!\cdot\!\cdot\!\cdot X_{\alpha_{m-2}}
X_{\alpha_{m-2}+1}\!\cdot\!\cdot\!\cdot
X_{\alpha_{m-1}-1}X_{\alpha_m}$, $f_{m+1}=
X_{\alpha_1}\!\cdot\!\cdot\!\cdot
X_{\alpha_{m-1}}X_{\alpha_{m-1}+1}\!\cdot\!\cdot\!\cdot
X_{\alpha_m-1}$ (Theorem \ref{Km+allstar}).\\
We prove that the linear relations $g_{ij}=f_{ij}T_j-f_{ji}T_i$ form
a Gr\"{o}bner basis of $N$ with respect to a monomial order $\prec$
on the polynomial ring $R[T_1,\ldots,T_{m+1}]$. Denote by $H$ the
ideal $(f_{ij}T_j:1\leqslant i<j\leqslant m+1)$. To show that $g_{ij}$ are a
Gr\"{o}bner basis of $N$ we suppose that the claim is false. Since
the binomial relations are known to be a Gr\"{o}bner basis of $N$,
there exists a binomial
$\underline{X}^a\underline{T}^{\alpha}-\underline{X}^b\underline{T}^{\beta}
\in N$, where $\underline{X}^a=X_1^{a_1}\cdots X_n^{a_n}$,
$\underline{X}^b=X_1^{b_1}\cdots X_n^{b_n}$,
$\underline{T}^{\alpha}=T_1^{\alpha_1}\cdots
T_{m+1}^{\alpha_{m+1}}$, $\underline{T}^{\beta}=T_1^{\beta_1}\cdots
T_{m+1}^{\beta_{m+1}}$, and the initial monomial of
$\underline{X}^a\underline{T}^{\alpha}-\underline{X}^b\underline{T}^{\beta}$
is not in $H$. More precisely, we assume that $T^{\alpha},T^{\beta}
$ have no common factors and that both
$\underline{X}^a\underline{T}^{\alpha}$ and
$\underline{X}^b\underline{T}^{\beta}$ are not in $H$.\\
Let $i$ be the smallest index such that $T_i$ appears in
$\underline{T}^{\alpha}$ or in $\underline{T}^{\beta}$. Since
$\underline{X}^a\underline{T}^{\alpha}-\underline{X}^b\underline{T}^{\beta}\in
N$, then $f_i$ divides
$\underline{X}^b\varphi(\underline{T}^{\beta})$, where
$\varphi(T_i)=f_it$. If $f_i|\underline{X}^b$, then let $T_j$ be any
of the variables of $\underline{T}^{\beta}$. One has $f_{ij}T_j|
f_iT_j|\underline{X}^b\underline{T}^{\beta}$ for $i<j$. This is a
contradiction by assumption (because
$\underline{X}^b\underline{T}^{\beta} \notin H$). \\
Hence $f_i\nmid \underline{X}^b$. Let $X_{i_1} \prec \ldots \prec
X_{i_{s}}$ be a total term order on the variables of $f_i$, and let
$f_i=X_{i_1}\cdots X_{i_{s}}$. Let $i_{k_1}, \ldots, i_{k_t} \in
\{i_1,\ldots,i_{s}\}$ be the indices  such that $X_{i_{k_1}},
\ldots, X_{i_{k_t}}$ don't divide $\underline{X}^b$ and $i_{k_1}$ be
the minimum of the indices such that $X_{i_{k_1}}$ does not divide
$\underline{X}^b$. Then $g_i=f_i/X_{i_{k_1}} \cdots X_{i_{k_t}}$
divides $\underline{X}^b$. Since $X_{i_{k_1}}$ divides
$\underline{X}^b\varphi(\underline{T}^{\beta})$ (because
$f_i|\underline{X}^b\varphi(\underline{T}^{\beta})$), then there
exists $j$ such that $T_j$ appears in $\underline{T}^{\beta}$ and
$X_{i_{k_1}}| f_j$. By the structure of the generators
$f_1,\ldots,f_{m+1}$ of $I_c(\mathcal{H})$ (see Theorem
\ref{Km+allstar})  if $X_{i_{k_1}}| f_i$ and $X_{i_{k_1}}| f_j$ with
$j$ such that $T_j$ is in $\underline{T}^{\beta}$, then
$f_{ij}|g_i$. Hence $f_{ij}$ divides $\underline{X}^b$ and, as a
consequence, $f_{ij}T_j$ divides
$\underline{X}^b\underline{T}^{\beta}$, that is a contradiction
(because $\underline{X}^b\underline{T}^{\beta} \notin H$). It
follows that $N=(g_{ij}: 1\leqslant i < j \leqslant m+1)$, hence
$I_c(\mathcal{H})$ is of linear type.
\end{proof}

\begin{theorem}
Let $R=K[X_1, \ldots,X_m, \ldots,  X_n]$.
$I_c(\mathcal{K}')$ is of linear type.
\end{theorem}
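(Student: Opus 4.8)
The plan is to mirror the proof of Theorem \ref{3.1} almost verbatim, since the only change is that the generators of $I_c(\mathcal{K}')$ now carry extra monomial factors coming from the looped vertices (as described in Theorem \ref{Km+star+loopsout}). First I would fix the minimal monomial generating set $f_1,\ldots,f_s$ of $I_c(\mathcal{K}')$ (with $s\leqslant m+1$) exactly as listed in Theorem \ref{Km+star+loopsout}, and set up the same bigraded polynomial ring $R[T_1,\ldots,T_s]$, the presentation ideal $N=\ker\varphi$, the linear relations $g_{ij}=f_{ij}T_j-f_{ji}T_i$, and the auxiliary monomial ideal $H=(f_{ij}T_j : 1\leqslant i<j\leqslant s)$. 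The goal is again to show that the $g_{ij}$ form a Gr\"obner basis of $N$ with respect to a suitable monomial order $\prec$, which forces $N=(g_{ij})=J$ and hence that $I_c(\mathcal{K}')$ is of linear type.

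Next I would run the same contradiction argument: assuming the $g_{ij}$ are not a Gr\"obner basis, there is a binomial $\underline{X}^a\underline{T}^{\alpha}-\underline{X}^b\underline{T}^{\beta}\in N$ with $\underline{T}^\alpha,\underline{T}^\beta$ coprime and neither $\underline{X}^a\underline{T}^\alpha$ nor $\underline{X}^b\underline{T}^\beta$ lying in $H$. Taking $i$ minimal such that $T_i$ occurs in $\underline{T}^\alpha$ or $\underline{T}^\beta$, one shows $f_i\mid \underline{X}^b\varphi(\underline{T}^\beta)$; the case $f_i\mid\underline{X}^b$ is dispatched immediately as before. In the remaining case one isolates the $\prec$-smallest variable $X_{i_{k_1}}$ of $f_i$ not dividing $\underline{X}^b$, finds $j$ with $T_j$ in $\underline{T}^\beta$ and $X_{i_{k_1}}\mid f_j$, and concludes $f_{ij}T_j\mid \underline{X}^b\underline{T}^\beta$, contradicting $\underline{X}^b\underline{T}^\beta\notin H$.

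The one genuinely new point, and the step I expect to be the main obstacle, is the combinatorial divisibility implication used at the very end: from $X_{i_{k_1}}\mid f_i$ and $X_{i_{k_1}}\mid f_j$ one must deduce $f_{ij}\mid g_i$ (equivalently that $f_{ij}$ divides $\underline{X}^b$). In the proof of Theorem \ref{3.1} this rested on the explicit shape of the generators of $I_c(\mathcal{H})$, where consecutive generators differ in a controlled way. For $I_c(\mathcal{K}')$ every generator is obtained from the corresponding generator of $I_c(\mathcal{H})$ by appending the same block of looped-vertex variables $X_{j_1}\cdots$, so $f_{ij}=f_i/\gcd(f_i,f_j)$ is unchanged on that common looped block and coincides with the $f_{ij}$ computed for $\mathcal{H}$. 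Thus I would argue that the structural property ``$X_k\mid f_i$ and $X_k\mid f_j$ imply $f_{ij}\mid f_i/\!\!\prod(\text{variables of }f_i\text{ absent from }\underline{X}^b)$'' transfers directly, because the extra loop variables are common to all generators and therefore cancel in every $f_{ij}$.

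Finally I would record that, the Gr\"obner basis property established, one has $N=(g_{ij}:1\leqslant i<j\leqslant s)=J$, so the natural surjection $\psi:\mathrm{Sym}_R(I_c(\mathcal{K}'))\to\Re(I_c(\mathcal{K}'))$ is an isomorphism and $I_c(\mathcal{K}')$ is of linear type. Rather than repeat the whole argument, the cleanest write-up would treat the generators of $I_c(\mathcal{K}')$ as those of $I_c(\mathcal{H})$ multiplied by a fixed squarefree monomial supported on the looped vertices, reduce the divisibility bookkeeping to the case already handled in Theorem \ref{3.1}, and invoke that theorem's computation for the common part.
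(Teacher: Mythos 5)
Your high-level strategy coincides with the paper's: rerun the Gr\"obner-basis contradiction argument of Theorem \ref{3.1} on the generators of $I_c(\mathcal{K}')$ listed in Theorem \ref{Km+star+loopsout}, the only point requiring new input being the structural implication that $X_{i_{k_1}}\mid f_i$ and $X_{i_{k_1}}\mid f_j$ (with $T_j$ in $\underline{T}^{\beta}$) force $f_{ij}\mid g_i$. You identify that step correctly, but the argument you offer for it rests on a false premise. The generators of $I_c(\mathcal{K}')$ are \emph{not} the generators of $I_c(\mathcal{H})$ times one fixed squarefree monomial supported on the looped vertices: by Property \ref{vertcov} (and the proof of Theorem \ref{Km+star+loopsout}), the generator attached to a minimal prime $\wp$ is multiplied by exactly those loop variables \emph{not belonging to} $\wp$, a set that varies with $\wp$. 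Take the paper's own example at the end of Section 2: $K_3\cup\mathrm{star}_1(4)\cup\mathrm{star}_2(4)\cup\mathrm{star}_3(3)$ with loops at $v_3,v_5,v_7,v_9$. The minimal generators are $f_1=(X_1X_2X_3)\cdot X_5X_7X_9$, $f_2=(X_2X_3X_4X_5X_6)\cdot X_7X_9$, $f_3=(X_1X_3X_7X_8X_9)\cdot X_5$; the appended blocks $X_5X_7X_9$, $X_7X_9$, $X_5$ are pairwise different, because a loop variable such as $X_5$ already occurs in the $\mathcal{H}$-generator $X_2X_3X_4X_5X_6$ and so is not appended to it. Consequently the quotients $f_{ij}$ do \emph{not} coincide with those computed for $I_c(\mathcal{H})$: here $f_{21}=X_4X_6$, whereas for $\mathcal{H}$ one gets $h_{21}=X_4X_5X_6$. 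The cancellation you invoke (``the extra loop variables are common to all generators and therefore cancel in every $f_{ij}$'') simply does not occur, and with it the proposed reduction to the computation of Theorem \ref{3.1} collapses.

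A second defect of the same reduction: the passage from $I_c(\mathcal{H})$ to $I_c(\mathcal{K}')$ need not even be a bijection on minimal generators. In the example above, the fourth generator $X_1X_2X_{10}X_{11}$ of $I_c(\mathcal{H})$ becomes $X_1X_2X_{10}X_{11}\cdot X_3X_5X_7X_9$, a multiple of $f_1$, and drops out of the minimal system; this is precisely why Theorem \ref{Km+allstar} gives $m+1$ generators but Theorem \ref{Km+star+loopsout} only \emph{at most} $m+1$. Any generator-by-generator transfer of divisibility bookkeeping from $\mathcal{H}$ must account for these dropped generators as well. To close the gap you must do what the paper's proof does: verify the implication $f_{ij}\mid g_i$ directly from the explicit shape of the monomials in Theorem \ref{Km+star+loopsout}, i.e.\ analyze which variables (centers, leaves of a common star, looped leaves, looped centers) two generators $f_i,f_j$ of $I_c(\mathcal{K}')$ can share, rather than quoting the analysis already carried out for the loopless ideal $I_c(\mathcal{H})$.
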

\begin{proof} Let $f_1,\ldots,f_{\ell}$ for $\ell \leqslant m+1$ be the minimal system of
monomial generators of $I_c(\mathcal{K}')$ described in Theorem \ref{Km+star+loopsout}.\\
We prove that the linear relations $g_{ij}=f_{ij}T_j-f_{ji}T_i$,
$1\leqslant i<j\leqslant \ell$,  form a Gr\"{o}bner basis of $N$.  We suppose
that the claim is false. Using the same notation of Theorem
\ref{3.1} there exists a binomial
$\underline{X}^a\underline{T}^{\alpha}-\underline{X}^b\underline{T}^{\beta}
\in N$
  and
$\textrm{in}_{\prec}(
\underline{X}^a\underline{T}^{\alpha}-\underline{X}^b\underline{T}^{\beta})$
is not in $H=(f_{ij}T_j: 1\leqslant i<j\leqslant \ell, \ \ \ell \leqslant m+1)$.\\
Let $i$ be the smallest index such that $T_i$ appears in
$\underline{T}^{\alpha}$ or in $\underline{T}^{\beta}$. Since
$\underline{X}^a\underline{T}^{\alpha}-\underline{X}^b\underline{T}^{\beta}\in
N$, then $f_i$ divides
$\underline{X}^b\varphi(\underline{T}^{\beta})$, where
$\varphi(T_i)=f_it$. If $f_i|\underline{X}^b$, then  $f_{ij}T_j|
f_iT_j|\underline{X}^b\underline{T}^{\beta}$, $i<j$ and  $T_j$ any
of the variables of $\underline{T}^{\beta}$. This is a
contradiction. \\
Hence $f_i\nmid \underline{X}^b$. Let  $f_i=X_{i_1} \dots
X_{i_{s}}$, $i_{k_1}, \ldots, i_{k_t} \in \{i_1,\ldots,i_{s}\}$ be
the indices such that $X_{i_{k_1}}, \ldots, X_{i_{k_t}}$ don't
divide $\underline{X}^b$ and $i_{k_1}$ be the minimum of the indices
such that $X_{i_{k_1}}$ does not divide $\underline{X}^b$. Set
$g_i=f_i/X_{i_{k_1}}\cdots X_{i_{k_t}}$. Since $X_{i_{k_1}}$ divides
$\underline{X}^b\varphi(\underline{T}^{\beta})$, then there exists
$j$ such that $T_j$ appears in $\underline{T}^{\beta}$ and
$X_{i_{k_1}}| f_j$. By the structure of the monomials
$f_1,\ldots,f_{\ell}$ (Theorem \ref{Km+star+loopsout}) if
$X_{i_{k_1}}| f_i$ and $X_{i_{k_1}}| f_j$ with $j$ such that $T_j$
is in $\underline{T}^{\beta}$, then $f_{ij}|g_i$. Hence
$f_{ij}|\underline{X}^b$ and, as a consequence, $f_{ij}T_j |
\underline{X}^b\underline{T}^{\beta}$, that is a contradiction.
Hence $N=(g_{ij}: 1\leqslant i < j \leqslant \ell, \ \ \ell \leqslant  m+1)$. The
thesis follows.
\end{proof}




\begin{thebibliography}{99}

\bibitem{CD}
{Conca, A., and De Negri, E.} -
\newblock M-sequences, graph ideal and ladder ideals of linear type.
\newblock{\em J. of Algebra} \textbf{211} (1999), 599--624.

\bibitem{CH}
{Conca, A., and Herzog, J.} - \newblock Castelnuovo-Mumford
regularity of product of ideals. \newblock{\em Collect. Math.}
\textbf{54} (2003), 137--152.

\bibitem{HH1}
{Herzog, J., and Hibi, T.} - \newblock Cohen-Macaulay polymatroidal
ideals.
\newblock {\em Eur. J. Comb.} \textbf{27}, 4 (2006), 513--517.


\bibitem{HH2}
{Herzog, J., and Hibi, T.} - \newblock Monomial Ideals.
\newblock {\em Springer, Graduate Texts in Mathematics} \textbf{260} (2011).

\bibitem{HT}
{Herzog, J., and Takayama, Y.} - \newblock Resolutions by mapping
cones.
\newblock {\em Homology, Homotopy Appl.} \textbf{4}, 2 (2002),  277--294.

\bibitem{H}
{Huneke, C.} -
\newblock On the symmetric and Rees algebra of an ideal generated by a d-sequence.
\newblock{\em J. of Algebra} \textbf{62} (1980), 268--275.

\bibitem{IL}
{Imbesi, M., and La Barbiera M.} -
\newblock Invariants of symmetric algebras associated to graphs.
 \newblock{\em Turk. J. Math.} \textbf{36}, 3 (2012), 345--358.

\bibitem{IL1}
{Imbesi, M., and La Barbiera M.} -
\newblock Algebraic properties of non-squarefree graph ideals.
\newblock{\em Math. Reports}, In press.

\bibitem{L}
{La Barbiera M.} -
\newblock On a class of monomial ideals generated  by $s$-sequences.
\newblock{\em Math. Reports} \textbf{12}(62), 3, (2010), 201--216.


\bibitem{V}
{Valla G.} -
\newblock On the symmetric and Rees algebra of an ideal.
\newblock{\em Manuscripta Math.} \textbf{30} (1980), 239--255.

\bibitem{V1}
{Villarreal, R.H.} - \newblock Monomial Algebras.
\newblock {\em M. Dekker Inc., New York, Pure and Appl. Math.} {\bf 238} (2001).
\end{thebibliography}
\end{document}